

\documentclass[a4paper,11pt]{amsart}

\usepackage{comment}

\usepackage{etoolbox}
\ifdef{\crop}{%
\usepackage[includeheadfoot,twoside=False,paperwidth=448pt,paperheight=587pt,rmargin=15pt,lmargin=15pt,tmargin=15pt,bmargin=15pt]{geometry}%
}{%
\setlength{\topmargin}{22mm}
\addtolength{\topmargin}{-1in}
\setlength{\oddsidemargin}{27mm}
\addtolength{\oddsidemargin}{-1in}
\setlength{\evensidemargin}{27mm}
\addtolength{\evensidemargin}{-1in}
\setlength{\textwidth}{156mm}
\setlength{\textheight}{240mm}
}%

\usepackage{here}
\usepackage{color}
\usepackage[active]{srcltx}
\usepackage{amsmath,amsthm,amsxtra}
\usepackage{amssymb}

\usepackage[mathscr]{eucal}

\usepackage{bm}
\usepackage[all]{xy}

\usepackage{aliascnt}

\usepackage{tabularx}
\newcolumntype{C}{>{\centering\arraybackslash}X} 

\theoremstyle{plain}
\newtheorem{thm}{Theorem}[section]
\newtheorem*{thm*}{Theorem}

\newaliascnt{prop}{thm}
\newaliascnt{cor}{thm}
\newaliascnt{lem}{thm}
\newaliascnt{claim}{thm}
\newaliascnt{defn}{thm}
\newaliascnt{ques}{thm}
\newaliascnt{conj}{thm}
\newaliascnt{fact}{thm}
\newaliascnt{rem}{thm}
\newaliascnt{ex}{thm}
\newtheorem{prop}[prop]{Proposition}
\newtheorem{cor}[cor]{Corollary}
\newtheorem{lem}[lem]{Lemma}
\newtheorem{claim}[claim]{Claim}
\newtheorem*{prop*}{Proposition}
\newtheorem*{cor*}{Corollary}
\newtheorem*{lem*}{Lemma}
\newtheorem*{claim*}{Claim}
\theoremstyle{definition}

\newtheorem{conj}[conj]{Conjecture}
\newtheorem*{defn*}{Definition}
\newtheorem*{ques*}{Question}
\newtheorem*{conj*}{Conjecture}

\newtheorem*{prob*}{Problem}

\newtheorem{rem}[rem]{Remark}
\newtheorem{ex}[ex]{Example}
\newtheorem*{fact*}{Fact}
\newtheorem*{rem*}{Remark}
\newtheorem*{ex*}{Example}
\aliascntresetthe{prop}
\aliascntresetthe{cor}
\aliascntresetthe{lem}
\aliascntresetthe{claim}
\aliascntresetthe{defn}
\aliascntresetthe{ques}
\aliascntresetthe{conj}
\aliascntresetthe{fact}
\aliascntresetthe{rem}
\aliascntresetthe{ex}

\usepackage[pointedenum]{paralist}
\usepackage{varioref}
\labelformat{equation}{\textnormal{(#1)}}
\labelformat{enumi}{\textnormal{(#1)}}

\usepackage[a4paper]{hyperref}

\def\textsectionN~{\textsection{}}


\renewcommand\phi{\varphi}
\renewcommand\epsilon{\varepsilon}
\renewcommand\leq{\leqslant}
\renewcommand\geq{\geqslant}

\makeatletter
\newcommand{\set}{  \@ifstar{\@setstar}{\@set}}\newcommand{\@setstar}[2]{\{\, #1 \mid #2 \,\}}
\newcommand{\@set}[1]{\{ #1 \}}
\makeatother

\newcommand{\trans}[1][1]{\raisebox{#1ex}{\scriptsize\kern0.1em$t$\kern-0.1em}}

\DeclareMathOperator{\codim}{codim}
\DeclareMathOperator{\Hom}{Hom}

\DeclareMathOperator{\Pic}{Pic}

\DeclareMathOperator{\id}{id}

\DeclareMathOperator{\Sym}{Sym}

\DeclareMathOperator{\pr}{pr}

\def\Z{\mathbb{Z}}
\def\Q{\mathbb{Q}}
\def\R{\mathbb{R}}
\def\C{\mathbb{C}}

\def\r+{\mathbb{R}_{\geq 0}}

\def\ep{\varepsilon}

\def\r+{{\R}_{\geq 0}}
\def\q+{{\Q}_{\geq 0}}
\def\P{\mathbb{P}}

\def\*c{\C^{\times}}

\def\<{\langle}
\def\>{\rangle}

\def\ni{\noindent}
\def\ra{\rightarrow}
\def\lra{\Leftrightarrow}

\def\C{\mathbb {C}}

\def\Q{\mathbb {Q}}
\def\R{\mathbb {R}}

\def\Z{\mathbb {Z}}

\newcommand{\calf}{\mathcal {F}}

\newcommand{\cali}{\mathcal {I}}

\newcommand{\call}{\mathcal {L}}

\newcommand{\calo}{\mathcal {O}}

\newcommand{\calx}{\mathcal {X}}

\makeatletter
  
  \@addtoreset{equation}{section}
\makeatother

\title[Higher syzygies on general polarized abelian varieties of type $(1,\dots,1,d)$]{Higher syzygies on general polarized abelian varieties of type $(1,\dots,1,d)$}

\author[A.~Ito]{Atsushi~Ito}
\address{Graduate School of Mathematics,
Nagoya University,
Nagoya, Japan}
\email{atsushi.ito@math.nagoya-u.ac.jp}

\subjclass[2010]{14C20,14K99}
\keywords{Syzygy, Abelian variety, Basepoint-freeness threshold}

\begin{document}

\maketitle

\begin{abstract}
In this paper,
we show that 
a general polarized abelian variety $(X,L)$ of type $(1,\dots,1,d)$ and dimension $g$
satisfies property $(N_p)$ if
$ d \geq \sum_{i=0}^{g} (p+2)^i$.
In particular,
the case $p=0$ affirmatively solves a conjecture by L.~Fuentes Garc\'{\i}a on projective normality. 
\end{abstract}

\section{Introduction}

Throughout this paper, we work over the complex number field $\C$.

For an ample line bundle $L$ on an abelian variety $X$ of dimension $g$,
we can associate a sequence of positive integers $(d_1,\dots,d_g)$ with $d_1|d_2|\cdots|d_g$,
called the \emph{type} of $L$.
It is well known that $L$ is basepoint free if $d_1 \geq 2$ and projectively normal if $d_1 \geq 3$.
On the other hand,
in the case $d_1=1$,
equivalently the case when $L$ is not written as some multiple of another line bundle, 
basepoint freeness or projective normality of $L$ is more subtle.
In \cite{MR1299059},
the authors investigate general $L$ of type $(1,\dots,1,d)$
and prove the following theorem.

\begin{thm}[{\cite[Proposition 2, Proposition 6, Corollary 25]{MR1299059}}]\label{thm_DHS}
Let $(X,L)$ be a general polarized abelian variety of type $(1,\dots,1,d)$ and dimension $g$.
Then 
\begin{enumerate}
\item $L$ is base point free if and only if $d \geq g+1$.
\item The morphism defined by $|L|$ is birational onto the image if and only if $d \geq g+2$.
\item $L$ is very ample if $d > 2^g$.
\end{enumerate}
\end{thm}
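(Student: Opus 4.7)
The plan is to separate \emph{necessity} in (1) and (2) from \emph{sufficiency} in (1), (2), (3), which are proved by quite different techniques.

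For the necessity I would use two basic facts: $h^0(L)=d_1\cdots d_g=d$ by Riemann--Roch, and a very general $(X,L)$ of type $(1,\dots,1,d)$ has $X$ simple as an abelian variety. In (1), if $L$ were basepoint-free with $d\leq g$, the image of the induced morphism $\phi_L\colon X\to\P^{d-1}$ would have dimension at most $d-1<g$, so Stein factorization would produce a fibration of $X$ by positive-dimensional fibers whose connected component through the origin generates a proper abelian subvariety, contradicting simplicity. In (2), if $d=g+1$ and $\phi_L$ were birational onto its image, that image would be $g$-dimensional inside $\P^g$, hence equal to $\P^g$; this forces $X$ to be birational, hence equal, to $\P^g$, which is impossible for an abelian variety. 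Thus $d\geq g+1$ and $d\geq g+2$ respectively are forced.

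For the sufficiency parts I would rely on specialization together with upper semicontinuity. The moduli stack of polarized abelian varieties of type $(1,\dots,1,d)$ is irreducible, and basepoint-freeness of $L$, generic finiteness of $\phi_L$, and separation of all points and tangent vectors are all open conditions in families. It therefore suffices, for each of (1), (2), (3) and the relevant $d$, to exhibit a single $(X_0,L_0)$ of type $(1,\dots,1,d)$ enjoying the desired property. The natural candidates live on the boundary of moduli: product polarizations on $E\times A'$ with $E$ an elliptic curve and $A'$ of type $(1,\dots,1,d')$, iterated such products, or semi-abelian / toric degenerations where global sections of $L$ admit explicit theta-function $q$-expansions. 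On such a model, basepoint-freeness and birationality reduce to concrete combinatorial statements about which translates of theta divisors the relevant sections vanish on, and the linear bounds $g+1$, $g+2$ emerge by counting.

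The main obstacle is the sufficiency in (3), very ampleness for $d>2^g$. The exponential bound points to an inductive bisection strategy: write $L\simeq M_1\otimes M_2$ with $M_1,M_2$ ample of smaller type (after passing to a suitable isogeny if necessary), and use the multiplication map $H^0(M_1)\otimes H^0(M_2)\to H^0(L)$ together with the criterion that $L$ is very ample iff for every $x\neq y$, and every nonzero $v\in T_xX$, some section of $L$ separates them. The delicate point is controlling the \emph{bad} locus of pairs where $M_1$ and $M_2$ simultaneously fail, since it is precisely this locus which doubles at each step of the induction and produces the $2^g$ bound; pinning it down on the chosen degeneration, and verifying that the translations appearing in the separation argument avoid it uniformly in $g$, is where I expect the technical bulk of the proof to lie.
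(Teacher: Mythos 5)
First, a point of comparison that matters: the paper does not prove this statement at all — it is quoted verbatim from Debarre--Hulek--Spandaw \cite{MR1299059} — so there is no internal proof to measure you against; the introduction only records that DHS obtain (3) by degenerating to a polarized variety whose normalization is a $(\P^{1})^{g-1}$-bundle over an elliptic curve and checking very ampleness there. Your necessity arguments are essentially right, with one logical slip in (1): the connected positive-dimensional fiber through the origin need not generate a \emph{proper} abelian subvariety (a curve of genus $\geq 2$ in its Jacobian generates the whole Jacobian), so simplicity by itself yields no contradiction. The repair is simpler than what you wrote and needs no genericity: $L$ is ample, so $\phi_L$ contracts no curve, hence is finite onto its image, hence $g=\dim \phi_L(X)\leq d-1$. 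Your argument for the necessity in (2) is fine.

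The sufficiency halves, which carry all the content of the theorem, remain unexecuted strategies. For (1) and (2) you never exhibit or verify a single $(X_0,L_0)$; ``the linear bounds $g+1$, $g+2$ emerge by counting'' is precisely the theorem to be proved. (DHS do this not by degeneration but by descending $L$ along the quotient by a maximal isotropic subgroup of $K(L)\simeq(\Z/d\Z)^{\oplus 2}$ to a principal polarization $\Theta$, which identifies $\Bs|L|$ with the preimage of the intersection of $d$ specific translates of $\Theta$, and then showing that for general moduli $g+1$ such translates already meet in the empty set; birationality is a similar count for separating a general pair of points.) For (3), your bisection $L\simeq M_1\otimes M_2$ cannot even start: for general $(X,L)$ of type $(1,\dots,1,d)$ one has $\mathrm{NS}(X)=\Z\cdot l$ with $l$ primitive, so $L$ admits no factorization into two ample line bundles, and pulling back along an isogeny does not allow very ampleness to descend. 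The $2^g$ in DHS comes from a different mechanism entirely — on the degenerate fiber the line bundle has degree $2$ on each $\P^1$-factor of the normalization, and $2^g$ is the threshold for very ampleness there — so part (3) of your proposal is not merely incomplete but aimed in a direction that fails at the first step.
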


On the other hand, L.~Fuentes Garc\'{\i}a investigates projective normality based on the work \cite{MR1974682} of J.~N.~Iyer,
and conjectures the following:

\begin{conj}[{\cite[Conjecture 4.7]{MR2181770}}]\label{conj_FG}
Let $(X,L)$ be a general polarized abelian variety of type $(1,\dots,1,d)$ and dimension $g$.
Then $L$ is projectively normal if $d \geq 2^{g+1} -1$.
\end{conj}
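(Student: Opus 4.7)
The plan is to obtain Conjecture~\ref{conj_FG} as the $p=0$ case of a sharper result about higher syzygies; indeed $\sum_{i=0}^{g}2^{i} = 2^{g+1}-1$, so the conjectural bound coincides with the bound announced in the abstract at $p=0$. The route I would take goes through the \emph{basepoint-freeness threshold} $\epsilon(X,L)\in[0,1]$ of Jiang--Pareschi, together with the criterion of F.~Caucci asserting that $(X,L)$ satisfies property $(N_p)$ whenever $\epsilon(X,L)<\tfrac{1}{p+2}$. In particular the conjecture is implied by the bound $\epsilon(X,L)<\tfrac{1}{2}$ for general $(X,L)$ of type $(1,\dots,1,d)$ with $d\geq 2^{g+1}-1$, and more generally $(N_p)$ follows once $\epsilon(X,L)<\tfrac{1}{p+2}$.

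The core task is then to estimate $\epsilon(X,L)$ for a general member of the moduli of polarized abelian varieties of type $(1,\dots,1,d)$. First I would establish an appropriate upper semicontinuity of $\epsilon$ in flat families, reducing the problem to exhibiting a single $(X_{0},L_{0})$ of the correct type achieving the bound. For such a special fiber a natural candidate is an isogenous cover of a product $E\times A$, where $E$ is an elliptic curve and $A$ is a general PAV of dimension $g-1$, chosen so that the polarization type of $X_{0}$ is still $(1,\dots,1,d)$ and so that an induction on $g$ can be set up. The recursion $\sum_{i=0}^{g}(p+2)^{i}=1+(p+2)\sum_{i=0}^{g-1}(p+2)^{i}$ shaping the target bound is exactly what one expects from an inductive step that gains one factor of $p+2$ in the required degree, with the trailing $+1$ coming from the base case on the elliptic curve factor.

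The hardest part, as I see it, is twofold. First, the semicontinuity of $\epsilon$ is not automatic from its definition as an infimum, and must be deduced from a cohomological reformulation, for instance via the behaviour of continuous evaluation maps on the dual abelian variety $\widehat{X}$ or via the Fourier--Mukai transform of a twisted ideal sheaf of a point. Second, and more seriously, analyzing $\epsilon$ on the special fiber demands a tight control of how the threshold interacts with isogenies and product decompositions; extracting the \emph{sharp} coefficient $\sum(p+2)^{i}$, rather than the crude $(p+2)^{g+1}$-type bound that a naive induction would yield, appears to be the decisive delicate point of the argument and is where the full strength of Theorem~\ref{thm_DHS} (ensuring base-point freeness on each factor once the partial degree $\geq g+1$) is likely to be exploited.
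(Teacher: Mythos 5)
Your overall architecture coincides with the paper's: property $(N_p)$ is reduced to the bound $\beta(l)<1/(p+2)$ on the Jiang--Pareschi basepoint-freeness threshold via \autoref{thm_ Bpf_threshold}; the upper semicontinuity of $\beta$ is proved exactly as you suggest, by reformulating $\beta(l)\leq x$ as a single cohomology vanishing $h^1(\mu_b^*\cali_p\otimes L^{ab})=0$ and invoking semicontinuity of cohomology (\autoref{thm_semicontinuity}); and the identity $\sum_{i=0}^g(p+2)^i=2^{g+1}-1$ at $p=0$ gives the conjecture. So the frame is right.

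The genuine gap is the special fiber, which you explicitly defer and which is the substance of the paper's argument. Two concrete points. First, your candidate --- an isogenous cover of $E\times A$ with $A$ a \emph{general} polarized abelian $(g-1)$-fold, handled by induction on $g$ --- does not obviously close: semicontinuity gives the inductive threshold bound only for general $A$, yet you must then embed such an $A$ in a very special total space whose polarization type is exactly $(1,\dots,1,d)$, and controlling the type of an isogenous cover of a product is precisely the delicate arithmetic you cannot wave away. The paper sidesteps this by taking the special fiber to be an explicit product of elliptic curves $E_1\times\cdots\times E_g$ polarized by $D=aF_1+\sum_{i=2}^{g-1}F_i+bF_g+\Gamma$, where $\Gamma$ is the graph of $(p_1,\dots,p_{g-1})\mapsto\sum_i f_i(p_i)$ for chosen isogenies $f_i\colon E_i\to E_g$ with $\ker f_i\simeq\Z/k_i\Z$; that the type is $(1,\dots,1,d)$ is a direct computation of $K(l)$ (\autoref{lem_K(l)}), not an appeal to generality. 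Second, the sharp coefficient does not come from basepoint-freeness of the factors --- \autoref{thm_DHS} is never used in the proof (rather, its part (1) is recovered as the case $p=-1$). It comes from \autoref{lem_divisor}(2), the estimate $\beta(l)\leq\max\{\beta(l|_S),\,\chi(l|_S)/\chi(l)\}$ for an abelian divisor $S\subset X$, applied repeatedly along the flag $X\supset F_1\supset F_1\cap F_2\supset\cdots$, which yields $\beta(l)\leq\max_i\,\chi(l|_{X_{i+1}})/\chi(l|_{X_i})$. Choosing $b=p+2$ and the $k_i$ so that the successive Euler characteristics form the chain $1,\,1+(p+2),\,\dots,\,\sum_{j=0}^{g-1}(p+2)^j,\,d$, each consecutive ratio drops below $1/(p+2)$ exactly when $d\geq\sum_{j=0}^{g}(p+2)^j$; this is what produces the sharp sum rather than a crude $(p+2)^{g+1}$-type bound. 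Without this restriction lemma (or an equivalent mechanism), your induction has no way to propagate a threshold bound from the factor $A$ to $E\times A$, since $\beta$ of a product is not controlled by the $\beta$'s of the factors for a non-split polarization.
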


For $g=2$,
this conjecture follows from \cite{Lazarsfeld_proj_normal}, \cite{MR2076454} or \cite{MR1667600}.
Fuentes Garc\'{\i}a proves this conjecture for $g=3,4$
using results in \cite{MR1974682} and some calculations of the ranks of suitable matrices
using computer.
We note that $d=h^0(L) \geq 2^{g+1} - 1 $ is a necessary condition for the projective normality of $L$
since $\dim \Sym^2 H^0(L) \geq \dim H^0(L^{\otimes 2})$ must hold for such $L$.
Hence \autoref{conj_FG} states that it is a sufficient condition as well for general $(X,L)$.

In this paper,
we prove \autoref{conj_FG} affirmatively.
In fact, we prove not only projective normality but also property $(N_p)$ as follows:

\begin{thm}\label{thm_intro}
Let $p \geq -1$ be an integer and
let $(X,L)$ be a general polarized abelian variety of type $(1,\dots,1,d)$ and dimension $g$.
Then $L$ satisfies property $(N_p)$ if
\[
d \geq \sum_{i=0}^{g} (p+2)^i = 
  \begin{cases}
    g+1 & \text{ if } \ p=-1 \\
   ( (p+2)^{g+1}-1)/(p+1)
    & \text{ if } \ p \geq 0.
  \end{cases}
\]

In particular,
$L$ is projectively normal if and only if $d \geq 2^{g+1} -1$.
In this case, the homogeneous ideal of $X$ embedded by $|L|$ is generated by quadrics and cubics.  
\end{thm}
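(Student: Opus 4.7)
The plan is to use the basepoint-freeness threshold $\beta(L)$ of Jiang--Pareschi together with Caucci's criterion, which asserts that property $(N_p)$ holds for an ample line bundle $L$ on an abelian variety whenever $\beta(L) < 1/(p+2)$. The entire theorem thus reduces to proving that for a general polarized abelian variety $(X,L)$ of type $(1,\dots,1,d)$ with $d \geq \sum_{i=0}^{g}(p+2)^i$, one has $\beta(L) < 1/(p+2)$.

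Because the IT$(0)$ condition is open in families, $\beta$ is upper semicontinuous on the moduli of polarized abelian varieties, so it is enough to exhibit a single polarized abelian variety $(X_0,L_0)$ of type $(1,\dots,1,d)$ with $\beta(L_0) < 1/(p+2)$. The recursive shape of the hypothesis, $d \geq 1 + (p+2)\cdot\sum_{i=0}^{g-1}(p+2)^i$, strongly suggests an induction on the dimension $g$, anchored by the elliptic-curve case in which a line bundle of degree $d \geq p+3$ satisfies $\beta = 1/d < 1/(p+2)$. For the inductive step the natural candidate is to build $X_0$ as a limit (or an isogenous cover) of a product $X' \times E$, with $X'$ of dimension $g-1$ and $E$ an elliptic curve, and to equip it with a polarization restricting to type $(1,\dots,1,d')$ on $X'$ (with $d' = \sum_{i=0}^{g-1}(p+2)^i$) and to a line bundle of degree $p+2$ on $E$; the inductive hypothesis then controls the first factor while the elementary elliptic-curve estimate controls the second, the Fourier--Mukai / generic-vanishing formalism of Pareschi--Popa being the natural vehicle for combining them.

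The main obstacle, in my view, is precisely this combining step. A genuine product polarization cannot itself have type $(1,\dots,1,d)$ unless all the ``weight'' sits on a single factor, so the construction must involve a non-trivial isogeny or a carefully chosen specialization; one has to verify both that the limit still has type $(1,\dots,1,d)$ and that the two threshold estimates actually combine rather than merely coexist. Once this is achieved, the main inequality of the theorem follows, and the last sentence is deduced at once: the ``if'' direction is the $p=0$ case, while the ``only if'' direction is the classical observation that projective normality forces the surjection $\Sym^2 H^0(L) \twoheadrightarrow H^0(2L)$, whence $\binom{d+1}{2} \geq h^0(2L) = 2^g d$, i.e.\ $d \geq 2^{g+1}-1$.
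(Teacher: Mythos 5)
Your reduction is exactly the paper's: by the Jiang--Pareschi/Caucci criterion it suffices to prove $\beta(l) < 1/(p+2)$, and by the upper semicontinuity of $\beta$ in families (which the paper establishes by rephrasing GV-ness of $\mathcal{I}_p\langle xl\rangle$ as the vanishing $h^1(\mu_b^*\mathcal{I}_p\otimes L^{ab})=0$ and invoking semicontinuity of cohomology) one only needs a single polarized abelian variety of type $(1,\dots,1,d)$ with $\beta < 1/(p+2)$. Your derivation of the ``only if'' half of the last sentence from $\binom{d+1}{2}\geq h^0(2L)=2^gd$ is also correct. The problem is that the step you yourself flag as ``the main obstacle'' --- actually producing the example --- is the technical heart of the theorem, and your proposal does not supply it. Observing that the construction ``must involve a non-trivial isogeny or a carefully chosen specialization'' and that the two estimates must ``actually combine rather than merely coexist'' names the difficulty without overcoming it: as written there is no candidate polarization for which either the type or the threshold can be verified, so the proof is incomplete precisely where the content lies.

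For comparison, the paper resolves this by taking $X_0=E_1\times\cdots\times E_g$ with isogenies $f_i:E_i\to E_g$ of degree $k_i$ and the divisor $D_{a,b}=aF_1+\sum_{i=2}^{g-1}F_i+bF_g+\Gamma$, where $\Gamma$ is the graph of $(p_1,\dots,p_{g-1})\mapsto\sum_i f_i(p_i)$; the graph summand is what makes the class primitive rather than a product polarization. Two separate verifications are then required. First, the type is shown to be $(1,\dots,1,d)$ by computing the kernel $K(l_{a,b})$ explicitly and sandwiching it between a group $K'$, identified with the kernel of an auxiliary \emph{surface} polarization on $E_1\times E_g$ whose type is known, and its multiple $NK'$. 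Second, the threshold is bounded by iterating the codimension-one restriction inequality $\beta(l)\leq\max\{\beta(l|_S),\,\chi(l|_S)/\chi(l)\}$ along the flag $X\supset F_1\supset F_1\cap F_2\supset\cdots$, which yields the telescoping bound $\beta(l)\leq\max_i\{\chi(l|_{X_{i+1}})/\chi(l|_{X_i})\}$; choosing $b=p+2$ and $k_i=m^{g-i}$ with $m=p+2$ makes each ratio equal to $\bigl(\sum_{j=0}^{g-i-1}m^j\bigr)/\bigl(\sum_{j=0}^{g-i}m^j\bigr)<1/m$, and the final ratio $\bigl(\sum_{j=0}^{g-1}m^j\bigr)/d<1/m$ is exactly where the hypothesis $d\geq\sum_{i=0}^{g}(p+2)^i$ enters. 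Your ``restrict to a factor of type $(1,\dots,1,d')$ plus degree $p+2$ on an elliptic curve'' picture is morally this telescoping, but without the explicit $\Gamma$, the kernel computation, and the restriction lemma, neither claim is established.
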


We refer the readers to \cite[Chapter 1.8.D]{MR2095471}, \cite{MR2103875} for the definition of property $(N_p)$.
We just note here that 
$(N_p)$'s consist an increasing sequence of positivity properties.
For example, 
($N_0$) holds for $L$ if and only if $L$ defines a projectively normal embedding,
and ($N_1$) holds if and only if ($N_0$) holds and the homogeneous ideal of the embedding is generated by quadrics. 
Usually $(N_p)$ is considered for $p \geq 0$,
but we add the basepoint freeness in the sequence of positivity properties as $(N_{-1})$, as in \cite{lozovanu:2018}, \cite{jiang2021}.
Hence the case $p=-1$ of \autoref{thm_intro} recovers \autoref{thm_DHS} (1)
and the case $p=0$ of \autoref{thm_intro} proves \autoref{conj_FG} affirmatively.

For an abelian surface $X$, 
it is known that a very ample line bundle $L$ of type $(1,d)$ with $d \geq 7$ is projectively normal and the homogeneous ideal of $X$ embedded by $|L|$
is generated by quadrics and cubics by 
\cite{Lazarsfeld_proj_normal}, \cite{MR2076454}, \cite{MR3656291}.
The last statement of \autoref{thm_intro} generalizes this result to higher dimensions at least for general $(X,L)$.
Furthermore,
\autoref{thm_intro} gives a better bound of $d$ than the bounds obtained by \cite{MR4009173}, \cite{MR3923760} in dimension two 
and \cite{MR1974682}, \cite{MR2833789}, \cite{Ito:2020aa}, \cite{jiang2021} in higher dimensions.
See Remarks \ref{rem_surface}, \ref{rem_related_results} for details.

\vspace{2mm}
In the rest of Introduction,
we explain the idea of the proof.
In \cite{MR4157109}, Z.~Jiang and G.~Pareschi introduce
an invariant $\beta(l)=\beta(X,l) \in (0,1]$,
called the \emph{basepoint-freeness threshold}, 
for the class $l \in \Pic X/\Pic^0 X$ of an ample line bundle $L$.
By \cite{MR4157109}, \cite{MR4114062}, 
a suitable upper bound of $\beta(l)$ implies property $(N_p)$ as follows:

\begin{thm}[{\cite[Theorem D, Corollary E]{MR4157109},\cite[Theorem 1.1]{MR4114062}}]\label{thm_ Bpf_threshold}
Let $p \geq -1$ and
let $(X,L)$ be a polarized abelian variety. Then
$L$ satisfies $(N_p)$ if $\beta(l)  < 1/(p+2)$.
\end{thm}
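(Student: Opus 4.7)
The plan is to bridge property $(N_p)$ with the basepoint-freeness threshold via Pareschi-Popa's generic vanishing / M-regularity machinery on abelian varieties, using the Lazarsfeld kernel bundle as the intermediary. The conceptual picture is that $\beta(l)$ controls the GV property of a twisted ideal sheaf of the origin, while $(N_p)$ demands a $(p+2)$-fold positivity budget built from such sheaves.

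First, I would translate $(N_p)$ into a continuous-global-generation statement. Setting $M_L = \ker(H^0(X, L) \otimes \sO_X \twoheadrightarrow L)$, the Green-Lazarsfeld criterion asserts that $L$ satisfies $(N_p)$ if and only if
\[ H^0(X, \wedge^{p+1} M_L \otimes L^m) \otimes H^0(X, L) \to H^0(X, \wedge^{p+1} M_L \otimes L^{m+1}) \]
is surjective for every $m \geq 1$. Standard ample-line-bundle vanishing on an abelian variety, combined with Pareschi-Popa-type reductions, trim this to the single statement that $\wedge^{p+1} M_L \otimes L$ is \emph{continuously globally generated} in the Pareschi-Popa sense.

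Next I would activate $\beta(l) < 1/(p+2)$. This hypothesis provides a rational $x < 1/(p+2)$ such that the $\Q$-twisted sheaf $\cali_0 \langle xl \rangle$ (with $\cali_0$ the ideal sheaf of the origin) is GV, i.e., its Fourier-Mukai transform satisfies $\codim \Supp R^i \hat{S} \geq i$. Clearing denominators via an isogeny $[n] : X \to X$ with $nx \in \Z$ converts this into a bona fide integral GV statement on $X$. The geometric bridge to $M_L$ is the identification $M_L \cong \pr_{2*}(\cali_\Delta \otimes \pr_1^* L)$ on $X \times X$, which realizes $M_L \otimes L$ as a universal family of translates of $\cali_0 \langle l \rangle$. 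Iterating on a $(p+1)$-fold fibre product or a Hilbert scheme and taking a Koszul resolution presents $\wedge^{p+1} M_L \otimes L$ as a pushforward involving $(p+1)$ copies of twisted ideal sheaves together with one extra $L$-factor, a total of $p+2$ factors absorbing $(p+2)x < 1$ units of $l$-positivity. Pareschi-Popa's preservation of continuous global generation under pushforward and tensor, combined with $[n]^*$-descent, then propagates the GV input to continuous global generation of $\wedge^{p+1} M_L \otimes L$, closing the loop.

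The hardest step is this last bridge. A one-parameter GV statement about $\cali_0 \langle xl \rangle$ must be upgraded into a $(p+1)$-fold statement about $\wedge^{p+1} M_L \otimes L$, and the non-local-freeness of $\cali_0$ threatens M-regularity under naive tensoring. One must work with a carefully chosen Koszul resolution on the fibre power (or Hilbert scheme) and use $[n]^*$-descent to absorb the rational denominator in $x$. The arithmetic $p+2 = (p+1)+1$ reflects exactly ``one factor of positivity per wedge slot, plus one extra for the outer $L$'', explaining why the threshold is $1/(p+2)$ rather than $1/(p+1)$. Making this Fourier-Mukai / Koszul bookkeeping rigorous is the technical core of the Jiang-Pareschi and Caucci arguments.
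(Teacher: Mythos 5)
The paper does not prove this statement at all: it is imported verbatim from Jiang--Pareschi \cite{MR4157109} and Caucci \cite{MR4114062}, so there is no internal proof to compare against. Judged against the actual arguments in those references, your sketch assembles the right cast of characters (the kernel bundle $M_L$, the reformulation of $\beta(l)<x$ as an IT(0)/GV condition on $\cali_o\langle xl\rangle$, a Koszul-type criterion for $(N_p)$), but the decisive step --- passing from the one-parameter hypothesis on $\cali_o\langle xl\rangle$ to a $(p+1)$-fold statement about $\wedge^{p+1}M_L\otimes L$ --- is exactly the step you leave as a black box, and the route you point to (fibre powers or Hilbert schemes, continuous global generation, descent along $[n]$) is not how the cited proofs go and would run into real trouble: the family of translated ideal sheaves is not locally free, so naive tensoring destroys M-regularity, and continuous global generation of $\wedge^{p+1}M_L\otimes L$ by itself does not yield the surjectivity of the multiplication maps you need.

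The actual mechanism is cleaner and worth recording. (a) $(N_p)$ follows from $H^1(X,\wedge^{p+1}M_L\otimes L^{\otimes k})=0$ for all $k\geq 1$; in characteristic zero $\wedge^{p+1}M_L$ is a direct summand of $M_L^{\otimes (p+1)}$, so it suffices that the $\Q$-twisted sheaf $M_L^{\otimes(p+1)}\langle l\rangle$ be IT(0). (b) A transfer lemma shows that $\beta(l)<x$ implies $M_L\big\langle \tfrac{x}{1-x}\,l\big\rangle$ is IT(0); this is where the relation between $M_L$ and translates of $\cali_o\otimes L$ is really used. (c) The tensor product of IT(0) $\Q$-twisted sheaves, at least one of which is locally free, is again IT(0); writing $l=(p+1)\cdot\tfrac{1}{p+1}l$ and distributing one share to each of the $p+1$ factors of $M_L$ closes the argument, since $\tfrac{1}{p+1}>\tfrac{x}{1-x}$ is equivalent to $x<\tfrac{1}{p+2}$. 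This last equivalence also corrects your heuristic for the threshold: $1/(p+2)$ arises from splitting the single outer copy of $L$ among the $p+1$ kernel-bundle factors, not from ``one unit of positivity per wedge slot plus one extra for $L$.'' Without the transfer lemma (b) and the tensor theorem (c), the proposal does not yet constitute a proof.
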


Hence the following theorem implies \autoref{thm_intro}.

\begin{thm}\label{main_thm}
Let $d ,g \geq 1$ be integers and set $m:=\lfloor  \sqrt[g]{d} \rfloor$.
Let $(X,l)$ be a general polarized abelian variety of type $(1,\dots,1,d)$ and dimension $g$.
Then 
\begin{enumerate}
\item $1/\sqrt[g]{d} \leq \beta(l) \leq 1/m$ holds.
\item $1/\sqrt[g]{d} \leq \beta(l)  < 1/m$ holds if $d \geq m^{g} + \cdots + m+1 = (m^{g+1}-1)/(m-1)$.
\end{enumerate}
\end{thm}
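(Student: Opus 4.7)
I would split the theorem into three inequalities: the universal lower bound $\beta(l)\geq 1/\sqrt[g]{d}$, the generic upper bound $\beta(l)\leq 1/m$, and the generic strict refinement $\beta(l)< 1/m$ under the hypothesis of (2). Only the last two are genuinely ``generic'' statements; the first is expected to hold for every polarized abelian variety of the given type.

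\textbf{Lower bound.} For the first inequality the plan is to invoke the general fact $\beta(l)^g\cdot (l^g)/g!\geq 1$ valid for any ample class on an abelian variety: if a $\Q$-effective divisor $D\equiv_\Q tl$ realizes the threshold at a point $x$, then $(tl)^g\geq \mult_x(D)^g\geq 1$. Combined with $l^g = g!\,d$ for type $(1,\dots,1,d)$, this yields $\beta(l)\geq 1/\sqrt[g]{d}$ unconditionally.

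\textbf{Upper bounds.} The heart of the theorem is the generic upper bound $\beta(l)\leq 1/m$, with a strict refinement under (2). My plan is to exhibit a single well-chosen pair $(X_0,l_0)$ of the fixed type $(1,\dots,1,d)$ on which an explicit construction produces a $\Q$-effective divisor $D_0\equiv_\Q (1/m)\,l_0$ (resp.\ $(1/m-\varepsilon)\,l_0$) with a non-klt point, and then to propagate the estimate to the generic member via standard semi-continuity of $\beta$ in families of polarized abelian varieties of fixed type. A natural candidate for $X_0$ is an isogenous cover of a product $A_1\times\cdots\times A_r$ of lower-dimensional polarized abelian varieties, chosen so that the pulled-back polarization has the desired type $(1,\dots,1,d)$ (not a more divisible one) and so that divisors can be pulled back factor-by-factor; the explicit degree of the isogeny would be controlled by $m^g$. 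The arithmetic excess $d-m^g\geq 1+m+\cdots+m^{g-1}$ appearing in (2) strongly suggests an inductive construction along a nested sequence $X_0\supset X_0^{(g-1)}\supset\cdots\supset X_0^{(0)}$ of abelian subvarieties, with the $i$-th step furnishing the $m^i$-contribution to $\sum_{i=0}^g m^i$ and producing the additional $\varepsilon$ of slack.

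\textbf{Main obstacle.} The hardest step will be realizing the model $(X_0,l_0)$ of the \emph{exact} type $(1,\dots,1,d)$, since this severely restricts the admissible isogenies and quotients. Once $(X_0,l_0)$ and the divisor $D_0$ are in place, extracting the strict improvement under (2) should follow from a counting argument on the excess $d-m^g$, and the transfer to generic $(X,l)$ via semi-continuity, together with the translation to property $(N_p)$ through \autoref{thm_ Bpf_threshold}, will then be routine.
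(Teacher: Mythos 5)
Your overall architecture --- the lower bound holds for every $(X,l)$ of the given type, and the upper bounds are obtained by exhibiting one example of the exact type and spreading it out by semicontinuity of $\beta$ --- is exactly the paper's. (Two small caveats: the semicontinuity of $\beta$ is not a pre-existing ``standard'' fact but is proved in \autoref{sec_semicontinuity} via the GV-characterization of $\beta(l)\leq x$ and semicontinuity of $h^1(\mu_b^*\cali_{\sigma(t)}\otimes L_t^{ab})$; and the lower bound does not come from multiplicities of $\Q$-divisors --- $\beta$ is not defined by divisors --- but from the Euler-characteristic inequality $\chi(\cali_{\mu_b^{-1}(p)}\otimes L^{ab})\geq 0$ forced by IT(0), which is \autoref{lem_divisor}~(1). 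Both are fillable.)

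The genuine gap is that the two ingredients you yourself identify as the heart of the matter are left unresolved, and the mechanism you propose for them would not deliver the stated bounds. First, the construction of $(X_0,l_0)$ of the \emph{exact} type $(1,\dots,1,d)$: pulling back a product polarization by an isogeny of degree controlled by $m^g$ will generically produce a non-primitive class, and even when primitivity can be arranged, $\beta$ of the target is governed by the ideal sheaf of the entire fiber $\pi^{-1}(o)$ rather than of a point, so the estimate does not transfer for free. The paper instead works directly on $X_0=E_1\times\cdots\times E_g$ with the primitive class of $aF_1+F_2+\cdots+F_{g-1}+bF_g+\Gamma$, where $\Gamma$ is the graph of $\sum_i f_i$ for isogenies $f_i:E_i\to E_g$ with cyclic kernels $\Z/k_i\Z$; primitivity is verified by an explicit computation of $K(l_{a,b})$ (\autoref{lem_example_surface}, \autoref{lem_K(l)}). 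Second, the upper-bound mechanism: a $\Q$-divisor $D_0\equiv_\Q(1/m)l_0$ with a non-klt point does not by itself bound $\beta(l_0)$; one needs the Nadel-vanishing/minimal-lc-center machinery, and that route (the one used in \cite{Ito:2020aa}, \cite{lozovanu:2018}, \cite{jiang2020cohomological}, and explicitly \emph{avoided} in this paper) loses factors of order $g^g/g!$ or $2^g$ --- it would not yield $\beta\leq 1/m$ from $d\geq m^g$. The paper's actual tool is the elementary restriction inequality $\beta(l)\leq\max\{\beta(l|_S),\,\chi(l|_S)/\chi(l)\}$ for an abelian divisor $S$ (\autoref{lem_divisor}~(2)), applied along the flag $X\supset F_1\supset F_1\cap F_2\supset\cdots$, with the $k_i$ chosen so that the successive ratios $\chi(l|_{X_{i+1}})/\chi(l|_{X_i})=(1+bN_{i+1})/(1+bN_i)$ all telescope to $\leq 1/m$ (resp.\ $<1/m$ when $d\geq m^g+\cdots+m+1$). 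Without these two ideas the proof does not go through.
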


\begin{rem}
Upper bounds of $\beta(l)$ imply not only properties $(N_p)$ but also jet ampleness and
vanishings of suitable Koszul cohomologies: 
\cite[Theorem D]{CaucciThesis}, \cite[Proposition 2.5]{Ito:2020aa} and \autoref{main_thm} imply that
$L$ is $p+1$-jet ample and
the Koszul cohomology group $K_{p,q}(X,L;kL)=0$ for any $q,k \geq 1$
if $(X,L)$ is a general polarized abelian variety of type $(1,\dots,1,d)$ and dimension $g$ with $d \geq \sum_{i=0}^{g} (p+2)^i$.
\end{rem}

In \cite{Ito:2020aa},
the author observes a similarity between $\beta(l)^{-1}$ and Seshadri constants.
Since Seshadri constants are lower-semicontinuous,
it is natural to ask whether $\beta(l)$ is upper-semicontinuous or not.
In fact, this is the case as we see in \autoref{sec_semicontinuity}.
Hence,
\autoref{main_thm} is reduced to finding 
an example $(X_0,L_0)$ of type $ (1,\dots,1,d)$ 
such that $\beta(l_0) \leq  1/m $ or $< 1/m$.

\autoref{thm_DHS} (3) is proved in \cite{MR1299059} 
by degenerating polarized abelian varieties to a polarized variety $(X'_0,L'_0)$
whose normalization is a $(\P^{1})^{g-1}$-bundle over an elliptic curve
and showing that $L'_0$ is very ample.
Contrary to very ampleness,
$\beta(l) $ is defined only for abelian varieties.
Hence we do not use such degenerations 
but find $(X_0,L_0)$ as a polarized abelian variety.
In fact,
we construct such $(X_0,L_0) $ as a suitable polarization on a product of elliptic curves.

We note that
\autoref{thm_ Bpf_threshold} is also used to show $(N_p)$ 
in \cite{Ito:2020aa}, \cite{jiang2021},
where techniques to cutting minimal log canonical centers are used
to bound $\beta(l)$ from above.
In this paper, we do not need such techniques.

\vspace{2mm}
This paper is organized as follows. 
In \autoref{sec_preliminary}, we recall some notation.
In \autoref{sec_semicontinuity},
we show the upper-semicontinuity of $\beta(l)$.
In \autoref{sec_surface},
we study $\beta(l)$ of polarized abelian surfaces 
and show \autoref{main_thm} for $g=2$.
In \autoref{sec_any_dim},
we prove Theorems \ref{thm_intro}, \ref{main_thm} in any dimension.
In 
Appendix,
we compute $\beta(l)$ of general polarized abelian surfaces $(X,l)$ of type $(1,d)$ for some $d$.

\subsection*{Acknowledgments}
The author would like to express his gratitude to
Professor Zhi Jiang for sending drafts of \cite{jiang2021} to the author.
He also thanks Professor Victor Lozovanu for valuable comments.
The author was supported by JSPS KAKENHI Grant Number 17K14162, 21K03201.

\section{Preliminaries}\label{sec_preliminary}

Let $X$ be an abelian variety of dimension $g$.
We denote the origin of $X$ by $o_X$ or $o \in X$.
For $b \in \Z$, we denote the multiplication-by-$b$ isogeny by
\[
\mu_b=\mu^X_b : X \rightarrow X , \quad p \mapsto bp.
\]

For an ample line bundle $L$ on $X$,
we call $(X,L)$ or $(X,l)$ a \emph{polarized abelian variety},
where $l \in \mathrm{NS}(X)=\Pic(X)/\Pic^0(X)$ is the class of $L$ in the Neron-Severi group of $X$.
Let 
\[
K(L)=\{ p \in X \, | \, t_p^* L \simeq L  \},
\]
where $t_p : X \rightarrow X$ is the translation by $p$ on $X$.
It is known that there exist positive integers $d_1|d_2 | \cdots|d_g$ such that 
$K(L) \simeq (\bigoplus_{i=1}^g \Z/d_i\Z )^{\oplus 2}$ as abelian groups.
The vector $(d_1,\dots,d_g)$ is called the \emph{type} of $L$.
Since $K(L)$ depends only on the class $l$ of $L$,
 $(d_1,\dots,d_g)$ is called the type of $l$ as well.
 It is known that $\chi(l)  =  \prod_{i=1}^g d_i$ holds.

\vspace{2mm}
For a coherent sheaf $\calf$ on $X$ and $x \in \Q$,
a \emph{$\Q$-twisted coherent sheaf} $\calf \langle xl \rangle$ is the equivalence class of the pair $(\calf,xl)$,
where the equivalence is defined by
\[
(\calf \otimes L^{m} , xl) \sim (\calf  , (x+m)l)
\]
for any line bundle $L$ representing $l$ and any $m \in \Z$.

Recall some notions of generic vanishing:
a coherent sheaf $\calf$ on $X$ is said to be \emph{IT(0)} if $h^i(X,\calf \otimes P_{\alpha}) =0$ for any $i >0$ and any $\alpha \in  \widehat{X}=\Pic^0 (X)$,
where $P_{\alpha}$ is the algebraically trivial line bundle on $X$ corresponding to $\alpha$.
It is said to be \emph{GV} if 
 $\codim_{\widehat{X}} \{ \alpha \in \widehat{X} \, | \,  h^i(X,\calf \otimes P_{\alpha}) >0 \} \geq i$ for any $i >0$.

In \cite{MR4157109}, 
such notions are extended to the $\Q$-twisted setting.
A $\Q$-twisted coherent sheaf $\calf \langle xl \rangle$ for $x=a/b$ with $b >0$ is said to be IT(0) or GV
if so is $\mu_b^* \calf \otimes L^{ab}$.
We note that this definition does not depend on the representation $x=a/b$ nor the choice of $L$ representing $l$.
By \cite[Theorem 5.2]{MR4157109},
$\calf \langle xl \rangle$ is GV if and only if
$\calf \langle (x+x')l \rangle$ is IT(0) for any rational number $x' >0$.

In \cite{MR4157109}, 
an invariant $0 < \beta(l) \leq 1$
is introduced for a polarized abelian variety $(X,l)$.
It is defined using cohomological rank functions, which are also defined in \cite{MR4157109}, but
$\beta(l)$ is characterized by the notion IT(0) as follows:

\begin{lem}[{\cite[Section 8]{MR4157109},\cite[Lemma 3.3]{MR4114062}}]\label{lem_beta_IT(0)}
Let $(X,l)$ be a polarized abelian variety and $x \in \Q$.
Then $\beta(l) < x$ if and only if $\cali_p \langle xl \rangle$ is IT(0) for some (and hence for any) $p \in X$,
where $\cali_p \subset \calo_X$ is the ideal sheaf corresponding to $p \in X$.
\end{lem}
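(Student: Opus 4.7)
The plan is to unpack the definition of $\beta(l)$ in terms of cohomological rank functions from \cite{MR4157109} and translate it into the IT(0) language via the generic vanishing bridge recalled just before the statement, namely \cite[Theorem 5.2]{MR4157109}: $\calf\langle xl\rangle$ is GV iff $\calf\langle(x+x')l\rangle$ is IT(0) for every rational $x'>0$. Recall that in \cite{MR4157109} the invariant $\beta(l)$ is characterized as the infimum over rational $x>0$ such that $\cali_o\langle xl\rangle$ is GV (equivalently, such that a suitable cohomological rank function attached to $\cali_o$ vanishes at $x$).

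For the ``for some iff for any'' clause, I would exploit translation invariance. The translation map $t_p:X\to X$ is an isomorphism with $t_p^*\cali_p=\cali_o$, and since $[t_p^*L]=[L]$ in $\mathrm{NS}(X)$ one has $t_p^*L=L\otimes P_{\alpha_p}$ for some $\alpha_p\in\widehat{X}$. Pulling back by $t_p$ identifies $\cali_p\langle xl\rangle$ with $\cali_o\langle xl\rangle$ up to a twist by an element of $\Pic^0(X)$. Because IT(0) of a $\Q$-twisted sheaf is tested against every $P_\alpha$, such an extra twist merely permutes the parameters $\alpha$, so IT(0) of $\cali_p\langle xl\rangle$ is independent of $p\in X$.

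For the main equivalence, suppose $\beta(l)<x$. Choose a rational $x_0$ with $\beta(l)<x_0<x$; by the definition of $\beta(l)$ the $\Q$-twisted sheaf $\cali_o\langle x_0 l\rangle$ is GV. Applying \cite[Theorem 5.2]{MR4157109} with $x':=x-x_0>0$ then gives that $\cali_o\langle xl\rangle$ is IT(0). Conversely, if $\cali_o\langle xl\rangle$ is IT(0), then by the rationality and monotonicity of the cohomological rank function, together with the GV-IT(0) dictionary, one can find a rational $x_0<x$ with $\cali_o\langle x_0 l\rangle$ still GV; this forces $\beta(l)\leq x_0<x$.

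The main technical point is reconciling the original definition of $\beta(l)$ in \cite[Section 8]{MR4157109}, given via the cohomological rank function $h^1_{\cali_o,X}$, with the ``GV infimum'' description used above: this relies on the monotonicity and rational continuity of that rank function, combined with \cite[Theorem 5.2]{MR4157109}. Since \cite[Section 8]{MR4157109} and \cite[Lemma 3.3]{MR4114062} both carry out exactly this translation, the proof proposed here is essentially a compression of those arguments, with the translation-invariance step added to cover the quantifier over $p$.
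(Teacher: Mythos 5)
The paper does not actually prove this lemma: it is imported verbatim from \cite[Section 8]{MR4157109} and \cite[Lemma 3.3]{MR4114062}, so there is no internal argument to compare yours against. That said, your treatment of the quantifier over $p$ (translation invariance, with the twist by an element of $\Pic^0(X)$ absorbed into the quantification over $\alpha$) and of the forward implication is correct: picking a rational $y$ with $\beta(l)\leq y<x$ at which $\cali_o\langle yl\rangle$ is GV and applying \cite[Theorem 5.2]{MR4157109} with $x'=x-y$ is exactly the right move, and identifying the rank-function definition of $\beta(l)$ with the ``GV infimum'' is legitimate because the higher cohomologies vanish automatically, as the paper notes in its Remark following the lemma.

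The gap is in the converse. From ``$\cali_o\langle xl\rangle$ is IT(0)'' you claim one can find a rational $x_0<x$ with $\cali_o\langle x_0l\rangle$ still GV, citing ``rationality and monotonicity of the cohomological rank function together with the GV--IT(0) dictionary.'' Monotonicity gives nothing here: a non-increasing function vanishing at $x$ may be positive at every $x_0<x$, and that is precisely the scenario $\beta(l)=x$ (with $\cali_o\langle xl\rangle$ GV) that you must exclude. The dictionary of \cite[Theorem 5.2]{MR4157109} does not help either, since as stated it only propagates vanishing upward (GV at $y$ yields IT(0) at $y+x'$ for $x'>0$), whereas you need to move strictly to the left. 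What the converse genuinely requires is the openness of the IT(0) locus in $x$ --- equivalently, that $\cali_o\langle \beta(l)\,l\rangle$ is never IT(0) --- and this must exploit the full strength of IT(0) (vanishing for \emph{every} $\alpha\in\widehat{X}$, not just generic $\alpha$, which is all the rank function $h^1_{\cali_o}$ detects), via the local polynomiality and Fourier--Mukai transformation formula for cohomological rank functions. That openness statement is the substantive content of the cited results, and your sketch asserts it without supplying, or correctly locating, the idea that proves it.
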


\begin{rem}\label{rem_GV}
For a rational number $x =a/b >0$,
$\beta(l) \leq x$ if and only if $\cali_p \langle xl \rangle$ is GV for some (and hence for any) $p \in X$
since
\begin{align*}
\beta(l) \leq x \ &\Longleftrightarrow \ \beta(l) < x + x' \text{ for any rational number } x' > 0\\
&\Longleftrightarrow \ \cali_p \langle (x+x')l \rangle  \text{ is IT(0) for any rational number } x' >0\\
&\Longleftrightarrow  \ \cali_p \langle xl \rangle  \text{ is GV}.
\end{align*}
Fix a representative $L$ of $l$.
By the exact sequence 
\[
 0 \ra \mu_b^* \cali_p \otimes L^{ab} \otimes P_{\alpha} \ra L^{ab} \otimes P_{\alpha} \ra  (\calo_X/\mu_b^* \cali_p) \otimes  L^{ab} \otimes P_{\alpha}  \ra 0
\]
and $ h^i(L^{ab} \otimes P_{\alpha})=h^i(( \calo_X/\mu_b^* \cali_p )\otimes  L^{ab} \otimes P_{\alpha} ) =0$ for $i \geq 1$, 
we have $h^i(\mu_b^* \cali_p \otimes L^{ab} \otimes P_{\alpha}) =0$ for any $\alpha \in \widehat{X}$ and $i \geq 2$.
Hence
$\cali_p \langle xl \rangle$ is GV if and only if $h^1(\mu_b^* \cali_p \otimes L^{ab} \otimes P_{\alpha}) =0$ for some $\alpha$.
Equivalently,
$\cali_p \langle xl \rangle$ is GV if and only if $h^1(\mu_b^* \cali_{p'} \otimes L^{ab} ) =0$ for some $p' \in X$.
\end{rem}

We use the following lemma to estimate $\beta(l)$. 

\begin{lem}[{\cite[Lemmas 3.4, 4.3]{Ito:2020aa}}]\label{lem_divisor}
Let $(X,l)$ be a polarized abelian $g$-fold.
Then 
\begin{enumerate}
\item[(i)] $\beta(l) \geq 1/ \sqrt[g]{\chi(l)}  $. 
\item[(ii)] For an abelian subvariety $Z \subset X$,
it holds that $\beta(l) \geq \beta(l|_Z)$.
Furthermore, 
\[
\beta(l|_Z) \leq \beta(l) \leq \max \left\{ \beta(l|_Z), \frac{g (l^{g-1}.Z)}{(l^g)} \right\} = \max \left\{ \beta(l|_Z), \frac{\chi(l|_Z)}{\chi(l)} \right\}  
\]
holds if the codimension of $Z \subset X$ is one.
\end{enumerate}
\end{lem}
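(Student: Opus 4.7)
The plan is to combine the IT(0) characterization of \autoref{lem_beta_IT(0)} with a direct Euler-characteristic calculation in (1), and with a natural restriction short exact sequence in (2).

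For (1), take any rational $x = a/b > \beta(l)$. By \autoref{lem_beta_IT(0)}, the sheaf $\mu_b^*\cali_p \otimes L^{ab}$ is IT(0), so its Euler characteristic is nonnegative. Tensoring the ideal sequence $0 \to \mu_b^*\cali_p \to \sO_X \to \mu_b^*\sO_p \to 0$ with $L^{ab}$ and using $\chi(L^{ab}) = (ab)^g\chi(l)$ together with $\mathrm{length}(\mu_b^*\sO_p) = b^{2g}$ gives $\chi(\mu_b^*\cali_p \otimes L^{ab}) = (ab)^g\chi(l) - b^{2g}$. Nonnegativity rearranges to $x^g\chi(l) \geq 1$, and infimizing over such $x$ yields $\beta(l) \geq \chi(l)^{-1/g}$.

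For (2), pick any $p \in S$, so that $\cali_{S,X} = \sO_X(-S) \subseteq \cali_{p,X}$, producing the short exact sequence
\[
0 \to \sO_X(-S) \to \cali_{p,X} \to \iota_*\cali_{p,S} \to 0
\]
with $\iota\colon S \hookrightarrow X$. After $\Q$-twisting by $xl$, the projection formula together with surjectivity of $\widehat X \to \widehat S$ identifies IT(0) of $\iota_*\cali_{p,S}\langle x(l|_S)\rangle$ on $X$ with IT(0) of $\cali_{p,S}\langle x(l|_S)\rangle$ on $S$. For the upper bound: the right term is IT(0) if $x > \beta(l|_S)$, while the left $\Q$-twisted line bundle $\sO_X(-S)\langle xl\rangle$ is IT(0) iff the class $xl - [S]$ is ample. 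Since translates of the abelian subvariety $S$ are numerically equivalent and disjoint, $[S]^2 \equiv 0$, so $(xl - S)^g = x^{g-1}(x(l^g) - g(l^{g-1}.S))$; positivity of this top self-intersection is equivalent to ampleness on an abelian variety, i.e.\ $x > g(l^{g-1}.S)/(l^g)$. Extensions of IT(0) sheaves are IT(0), giving $\beta(l) \leq \max\{\beta(l|_S),\,g(l^{g-1}.S)/(l^g)\}$.

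For the lower bound $\beta(l|_S) \leq \beta(l)$, I read the same long exact sequence in the opposite direction: for $x > \beta(l)$, IT(0) of the middle term together with the vanishing $H^j(\sO_X(xl-S)\otimes P_\alpha) = 0$ for $j \geq 2$ and all $\alpha$ forces IT(0) of the right term. The most delicate point is this $j \geq 2$ vanishing. The plan is to establish it by iterating the sequence
\[
0 \to \sO_X(-(k+1)S) \to \sO_X(-kS) \to \iota_*\sO_S \to 0,
\]
which is valid because the normal bundle $N_{S/X}$ of an abelian subvariety is trivial, combined with Kodaira vanishing for the ample line bundles $L^n$ on $X$ and $L^n|_S$ on $S$ (using that $\mu_b$-pullback reduces to integer twists). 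This inductive vanishing is the main technical obstacle; it is enabled precisely by the abelian subvariety structure of $S$.
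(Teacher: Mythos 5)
The paper does not prove this lemma: it is imported verbatim from \cite{Ito:2020aa}, and your argument is essentially the proof given there --- the Euler-characteristic computation for $\mu_b^*\cali_p\otimes L^{ab}$ in (1) (note $\chi\geq 0$ for an IT(0) sheaf since $\chi=h^0$), and the ideal-sheaf sequence $0\to\sO_X(-S)\to\cali_{p}\to\iota_*\cali_{p,S}\to0$ twisted and pulled back by $\mu_b$ in (2). Both parts are correct in substance.

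One assertion is stated too strongly and deserves repair, and the point you flag as the main technical obstacle is settled by the same repair. You claim that on an abelian variety positivity of the top self-intersection is \emph{equivalent} to ampleness; this is false in general, since a nondegenerate class of even index $\geq 2$ (e.g.\ of degrees $(-1,-1,1,1)$ on a product of four elliptic curves) has positive top self-intersection without being ample. What saves you here is that $\sO_X(S)$ is pulled back from the elliptic curve $X/S$, so its Hermitian form is positive semidefinite of rank one; hence for $x>0$ the form of $xl-[S]$ is a positive-definite form minus a rank-one semidefinite one and has index at most $1$. Therefore $(xl-S)^g>0$ forces index $0$, i.e.\ ampleness, and since $(S^2)=0$ the sign condition is exactly $x>g(l^{g-1}.S)/(l^g)$ as you say. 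The same index bound disposes of the $H^{\geq 2}$ vanishing needed for $\beta(l|_S)\leq\beta(l)$: the left-hand line bundle $\mu_b^*\sO_X(-S)\otimes L^{ab}\otimes P_\alpha$ has class $b^2(xl-[S])$ up to numerical triviality, so when nondegenerate it has index $\leq 1$ and only $H^0,H^1$ can be nonzero, and when degenerate it is (after a suitable $P_\alpha$-twist, or else has no cohomology at all) pulled back from a quotient by an at most one-dimensional subtorus of an index-$0$ class, so again $H^j=0$ for $j\geq 2$. Your filtration-plus-vanishing induction also works --- note that $\mu_b^*S$ is a sum of $b^2$ disjoint translates of $S$, and each partial difference restricts to a numerically trivial class on the next translate because $(S\cdot S)=0$, so the restricted bundles stay ample on the $(g-1)$-fold --- but it is not needed once the index observation is in place.
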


\section{Semicontinuity of basepoint-freeness thresholds}\label{sec_semicontinuity}

In this section,
we prove the upper-semicontinuity of $\beta(l)$ as follows:

\begin{thm}\label{thm_semicontinuity}
Let $f : \calx \rightarrow T$ be an abelian scheme over a variety $T$
and let $\call $ be a line bundle on $\calx$ which is ample over $T$.
Set $X_t:=f^{-1}(t), L_t :=\call|_{X_t}$ for $t \in T$
and let $l_t$ be the class of $L_t$.
Take a point $0 \in T$ and a rational number $x$ such that $\beta(X_0,l_0) \leq x$.
Then $\beta(X_t,l_t) \leq x $ holds for general $t \in T$.

In particular, the function $T \rightarrow \R : t \mapsto \beta(X_t,l_t)$ is upper-semicontinuous in Zariski topology.
\end{thm}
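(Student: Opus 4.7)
My plan is to rephrase the inequality $\beta(l_t) \leq x$ cohomologically via \autoref{rem_GV}, package the resulting conditions into a single coherent sheaf on a flat family over $\calx$, and then invoke Grauert/Grothendieck upper-semicontinuity together with the openness of $f$.

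Writing $x = a/b$ with $b > 0$, \autoref{rem_GV} reduces the statement $\beta(l_t) \leq x$ to the existence of some $y \in X_t$ with $h^1(X_t, \mu_b^{*}\cali_y \otimes L_t^{ab}) = 0$. To encode all these vanishings simultaneously, I would work on $\calx \times_T \calx$ with projections $\pi_1, \pi_2$. The section $\alpha \colon \calx \to \calx \times_T \calx$ of $\pi_1$ defined fibrewise by $x \mapsto (x, bx)$ is a closed embedding, and its image $\Gamma$ sits over $\calx$ via $\pi_2$ as the fibrewise isogeny $\mu_b$. I would then study the sheaf
\[
\calf := \cali_\Gamma \otimes \pi_1^{*}\call^{ab}
\]
as a family over $\calx$ via $\pi_2$.

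The heart of the argument is the verification that $\calf$ is flat over $\calx$ (via $\pi_2$), with fibre over $y \in \calx$ canonically identified with $\mu_b^{*}\cali_y \otimes L_{f(y)}^{ab}$. Flatness of $\sO_{\calx \times_T \calx}$ over $\calx$ is the base change of the smooth morphism $f$; flatness of $\sO_\Gamma$ over $\calx$ is the finite flatness of the isogeny $\mu_b$; the short exact sequence $0 \to \cali_\Gamma \to \sO_{\calx \times_T \calx} \to \sO_\Gamma \to 0$ together with the snake lemma transfers flatness to $\cali_\Gamma$, and twisting by the line bundle $\pi_1^{*}\call^{ab}$ preserves flatness. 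The fibrewise identification follows by restricting this exact sequence. Grauert/Grothendieck upper-semicontinuity then implies that
\[
V := \bigl\{\, y \in \calx \bigm| h^1(X_{f(y)},\, \mu_b^{*}\cali_y \otimes L_{f(y)}^{ab}) > 0 \,\bigr\}
\]
is closed in $\calx$.

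Finally, $\beta(l_t) \leq x$ is equivalent to $X_t \not\subset V$, and since $f$ is smooth hence open, $\{\, t \in T : \beta(l_t) \leq x \,\} = f(\calx \setminus V)$ is open in $T$; by hypothesis it contains $0$, giving the first assertion. For upper-semicontinuity of $t \mapsto \beta(l_t)$, the identity
\[
\{\, t \in T \mid \beta(l_t) < c \,\} = \bigcup_{x \in \Q,\, x < c} \{\, t \in T \mid \beta(l_t) \leq x \,\}
\]
exhibits each sublevel set as a union of open sets. The main obstacle I anticipate is the flat-family construction above, specifically verifying flatness of $\cali_\Gamma$ over $\calx$ and the fibrewise identification of $\calf$; once those are in hand, the semicontinuity theorem and the openness of $f$ finish the argument formally.
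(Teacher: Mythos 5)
Your argument is correct, and it reaches the conclusion by a genuinely different mechanism than the paper's, even though both begin with the same reduction via \autoref{rem_GV} to the vanishing of $h^1(\mu_b^{*}\cali_{y}\otimes L_t^{ab})$ for a single point $y$ of the fibre. The paper handles the existential quantifier over $y$ by choosing one witness $p_0\in X_0$, passing to a finite cover $T'\to T$ so that $p_0$ extends to a section $\sigma$, and applying classical semicontinuity along $\sigma$. You instead globalize the quantifier: you build the universal family $\calf=\cali_\Gamma\otimes\pi_1^{*}\call^{ab}$ on $\calx\times_T\calx$, flat and proper over $\calx$ via $\pi_2$, with fibre over $y$ equal to $\mu_b^{*}\cali_y\otimes L_{f(y)}^{ab}$; Grauert semicontinuity then makes the jump locus $V$ closed in $\calx$, and $\{t\in T\mid\beta(l_t)\leq x\}=f(\calx\setminus V)$ is open because $f$ is smooth, hence open. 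Your flatness verifications are the right ones and do go through: $\pi_2|_\Gamma$ is identified with the relative isogeny $\mu_b$, hence finite flat, so $\sO_\Gamma$ is flat over $\calx$; the Tor long exact sequence (rather than the snake lemma, strictly speaking) transfers flatness to $\cali_\Gamma$ and guarantees that the ideal sequence restricts exactly to fibres, identifying the fibre of $\cali_\Gamma$ with $\cali_{\mu_b^{-1}(y)}=\mu_b^{*}\cali_y$ since $\mu_b$ is flat. What your route buys is that it avoids the base change needed to produce a section and yields the openness of the sublevel set in $T$ directly, which is exactly what the ``in particular'' clause requires; what it costs is the extra care with the flat family on $\calx\times_T\calx$. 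The final step, writing $\{t\mid\beta(l_t)<c\}$ as a union of the open sets $\{t\mid\beta(l_t)\leq x\}$ over rationals $x<c$, is identical to the paper's.
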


\begin{proof}
Note that $ x$ is positive since $0 < \beta(X_0,l_0) \leq x$.
By \autoref{rem_GV}, 
there exists $p_0 \in X_{0}$ such that $h^1(\mu_b^* \cali_{p_0} \otimes L_{0}^{ab} ) =0$,
where $x=a/b$.
By taking a suitable base change of  $\calx \rightarrow T$ by a finite cover $T' \rightarrow T$,
we may assume that there exists a section $ \sigma : T \rightarrow \calx$ such that $\sigma (0) =p_0$.
Then $h^1(\mu_b^* \cali_{\sigma(t)} \otimes L_t^{ab} ) =0$ for general $t \in T$
by the semicontinuity of cohomology.
Hence $\beta(X_t,l_t) \leq x $ holds for general $t \in T$ by \autoref{rem_GV} again.

By definition,
the upper-semicontinuity of $t \mapsto \beta(X_t,l_t)$ is equivalent to the openness of 
$\{ t \in T \, | \, \beta(X_t,l_t) < y   \} $ in Zariski topology for any $y \in \R$.
We already show that $\{ t \in T \, | \, \beta(X_t,l_t) \leq x   \} $ is open for any $x \in \Q$.
Hence 
\[
\{ t \in T \, | \, \beta(X_t,l_t) < y   \}  = \bigcup_{x \in \Q, x < y} \{ t \in T \, | \, \beta(X_t,l_t) \leq x   \} 
\]
is open as well.
\end{proof}

\begin{rem}
By \autoref{thm_semicontinuity},
$\beta(X_t,l_t) \leq  \beta(X_0,l_0)$ holds for general $t \in T$ if $\beta(X_0,l_0)$ is rational.
If $\beta(X_0,l_0)$ is irrational,
though we do not know such examples yet,
we can only say that $\beta(X_t,l_t) \leq  \beta(X_0,l_0)$ holds for \emph{very general} $t \in T$.
\end{rem}

\section{On general polarized abelian surfaces of type $(1,d)$}\label{sec_surface}

Let $k \geq 1$ be an integer and 
take an isogeny $f : E \ra E'$ between elliptic curves with $\ker f \simeq \Z/k \Z$,
e.g.\ we take $ f : \C /( \Z + k\tau \Z) \rightarrow  \C / ( \Z + \tau \Z)$ induced from $\Z +k \tau \Z \subset  \Z +  \tau \Z$.
For the dual isogeny $\hat{f} : E'=\widehat{E'} \ra \widehat{E}=E$, it is well-known that
$
\hat{f} \circ f=\mu^E_{k}$ and $ f \circ \hat{f}=\mu^{E'}_{k} 
$
hold (see e.g.\ \cite[Chapter III, Theorem 6.2]{MR2514094}).

Set $X=E \times E'$ and 
\[
F_1=\{o_E\} \times E', \quad F_2=E \times \{o_{E'}\}, \quad \Gamma = \{ (p, f(p)) \subset E \times E' \, | \, p \in E \}.
\]

\begin{lem}\label{lem_example_surface}
Let $ a,b \geq 0$ be integers such that $(a,b) \neq (0,0)$
and let $l=l_{a,b} $ be the class of $D=aF_1 +bF_2+\Gamma$. 
Then 
\begin{enumerate}[(i)]
\item $(l.F_1)=1+b$.
\item $l$ is a polarization of type $(1,a+ab+bk )$.
\item $1/(1+b) \leq \beta(l) \leq \max\{ 1/(1+b), (1+b)/(a+ab+bk)\}$.
\item $K(l) = \{ (p,q) \in E \times E' \, | \, \hat{f}(q) =(a+k)p , f(p)=(1+b)q \}$.
\end{enumerate}
\end{lem}

\begin{proof}
(i) It is easy to check $(F_1^2)=(F_2^2)=(\Gamma^2)=0$ and $(F_1.F_2)=(F_1.\Gamma) =1, (F_2.\Gamma)=k$.
Hence $(l.F_1)=1+b$ holds.

\vspace{1mm}
\noindent
(ii) 
Since $(l^2) =2(a+ab+bk)> 0 $ and $(l. F_1+F_2)=a+b >0$,
$l$ is ample by \cite[Corollary 4.3.3]{MR2062673}.
Since $\ker f \simeq \Z/k$, $f \in \Hom(E,E')$ is primitive,
that is, $f$ is not written as $c \lambda$ for some integer $c \geq 2$ and some $\lambda \in \Hom(E,E')$.
Hence $l \in \mathrm{NS}(X)$ is primitive as well by 
\cite[Theorem 11.5.1]{MR2062673}.
Since $2 \chi(l) =(l^2) =2(a+ab+bk)$,
the type of $l$ is $(1,a+ab+bk)$.

\vspace{1mm}
\noindent
(iii) holds by applying \autoref{lem_divisor} (ii) to $S=F_1$ since $\beta(l|_{F_1}) =1/\deg (l|_{F_1})=1/(1+b)$.

\vspace{1mm}
\noindent
(iv) By $E\stackrel{\sim}{\rightarrow} \widehat{E} : p \mapsto \calo_{E}(p -o_{E})$ and $E'\stackrel{\sim}{\rightarrow} \widehat{E'} : q \mapsto \calo_{E'}(q -o_{E'}) $,
we may identify
$\widehat{X} =\widehat{E} \times \widehat{E'} $ with $X$.
For $(p,q) \in X$,
the point in $\widehat{X}=X$ corresponding to $t_{(p,q)}^* \calo_X(F_1) \otimes \calo_X(-F_1) = \pr_1^*\calo_{E}((-p)-o_E)$ is $(-p,o_{E'})$,
where $\pr_1 : X \rightarrow E$ is the natural projection.
Similarly, $t_{(p,q)}^* \calo_X(F_2) \otimes \calo_X(-F_2)$ corresponds to $(o_E,-q)$.

\begin{claim}\label{claim_Gamma}
$t_{(p,q)}^* \calo_X(\Gamma) \otimes \calo_X(-\Gamma)$ corresponds to $( \hat{f}(q) - kp, f(p)-q) \in E \times E'$.
\end{claim}

\begin{proof}[Proof of \autoref{claim_Gamma}]
For a line bundle or a divisor  $N$ on $X$,
we can define a group homomorphism 
\[
\varphi_N : X \ra \widehat{X} =X \quad : \quad   x \mapsto t_x^* N \otimes N^{-1}.
\]
We need to show $\varphi_{\Gamma} (p,q) = ( \hat{f}(q) - kp, f(p)-q) $ for $(p,q) \in X=E \times E'$.

Let $\Delta' \subset E' \times E'$ be the diagonal.
For $(q_1,q_2) \in E' \times E'$, we have
\[
t_{(q_1,q_2)}^* \Delta' \cap (E' \times \{o_{E'}\} ) =\{q_2 -q_1\}, \quad \Delta' \cap (E' \times \{o_{E'}\} ) =\{o_{E'}\}
\]
under the identification $E'= E' \times \{o_{E'}\}$.
Hence the algebraically trivial line bundle 
$t_{(q_1,q_2)}^* \calo (\Delta') \otimes \calo(-\Delta') |_{E' \times \{o_{E'}\} }$ on $E'$ corresponds to $ q_2-q_1 \in E'=\widehat{E'}$.
Similarly,
$t_{(q_1,q_2)}^* \calo (\Delta') \otimes \calo(-\Delta') |_{ \{o_{E'}\} \times E' }$ on $E'$ corresponds to $ q_1-q_2 \in E'=\widehat{E'}$.
Thus the map 
\[
\varphi_{\Delta'} : E' \times E' \ra \widehat{E' \times E'} \quad : \quad (q_1,q_2) \mapsto  t_{(q_1,q_2)}^* \calo (\Delta') \otimes \calo(-\Delta')
\]
is written as $\varphi_{\Delta'} (q_1,q_2) = (q_2-q_1,q_1-q_2) \in E' \times E' =\widehat{E' \times E'}  $.

Since $\Gamma$ is the pullback of $ \Delta'$ by $(f,\id_{E'}) : E \times E' \ra E' \times E'$,
we have a commutative diagram
\[
\xymatrix{
E \times E' \ar[r]^-{\varphi_{\Gamma}} \ar[d]_-{(f,\id_{E'})} & \widehat{E \times E'} = E\times E' \\
E' \times E' \ar[r]^-{\varphi_{\Delta'}} & \widehat{E' \times E'} = E'\times E' \ar[u]_-{(\hat{f},\id_{E'})} .
 }
\]
Hence $(p,q) \in E \times E'$ is mapped by $\varphi_{\Gamma} = (\hat{f}, \id_{E'}) \circ \varphi_{\Delta'} \circ (f,\id_{E'})$ as
\begin{align*}
(p,q) \mapsto (f(p), q) \mapsto (q-f(p), f(p) -q) \mapsto (\hat{f}(q)&-\hat{f}(f(p)), f(p)-q) .
\end{align*}
Since $\hat{f} (f(p)) = kp$ by $\hat{f} \circ f = \mu_k^E$, we have 
 $\varphi_{\Gamma}(p,q) = (\hat{f}(q)-kp, f(p)-q) \in E \times E'$ and 
this claim follows.
\end{proof}

By \autoref{claim_Gamma},
$t_{(p.q)}^* \calo_{X}(D) \otimes \calo_X(-D) $ corresponds to 
\[
a(-p,o_{E'}) +b (o_E, -q) + (\hat{f}(q)-kp, f(p)-q) =(\hat{f}(q)-(a+k)p, f(p)-(1+b)q) \in E \times E'.
\]
Hence $K(l) = \{ (p,q) \in E \times E' \, | \,  \hat{f}(q) =(a+k)p, f(p)=(1+b)q \}$
and (iv) follows.
\end{proof}

Now we can show \autoref{main_thm} for abelian surfaces:

\begin{prop}[{$=$ \autoref{main_thm} for $g=2$}]\label{prop_estimate_beta_surface}
Let $d \geq 1$ be an integer and set $m:=\lfloor  \sqrt{d} \rfloor$.
Let $(X,l)$ be a general polarized abelian surface of type $(1,d)$.
Then 
\begin{enumerate}
\item $1/\sqrt{d} \leq \beta(l) \leq 1/m$ holds.
\item $1/\sqrt{d} \leq \beta(l) \leq (m+1)/d < 1/m$ holds if $d \geq m^2+m+1$.
\end{enumerate}
\end{prop}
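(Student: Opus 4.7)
The plan is to split the proposition into its lower and upper bounds, and for the upper bounds to reduce, via the semicontinuity of \autoref{thm_semicontinuity}, to the explicit constructions of \autoref{lem_example_surface_d,d'}. The lower bound $1/\sqrt{d} \leq \beta(l)$ is immediate from \autoref{lem_divisor}(1) applied with $g=2$ and $\chi(l) = d$; this step requires no genericity of $(X,l)$.

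For the upper bounds I would invoke \autoref{thm_semicontinuity}. Since the moduli space of polarized abelian surfaces of type $(1,d)$ is irreducible, to show a bound holds for a general $(X,l)$ it suffices to exhibit one polarized abelian surface of type $(1,d)$ realizing that bound. \autoref{lem_example_surface_d,d'}(1) and (2) provide such examples on $X_0 = E \times E'$ whenever $d$ admits a decomposition $d = (m-1)s + r$, respectively $d = ms + r$, with $s > r \geq 1$.

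Constructing these decompositions is a short division-algorithm exercise. For (1), assuming $m \geq 2$ (the case $m = 1$ is vacuous since $\beta(l) \leq 1$ always), I would take $s := \lfloor(d-1)/(m-1)\rfloor$, giving $r = d - (m-1)s \in [1, m-1]$; the hypothesis $d \geq m^2$ forces $s \geq m+1 > r$. For (2), I would take $s := \lfloor(d-1)/m\rfloor$, giving $r \in [1,m]$, and the hypothesis $d \geq m^2 + m + 1$ forces $s \geq m+1 > r$. Feeding these choices into \autoref{lem_example_surface_d,d'} then supplies the required $(X_0,l_0)$, with $\beta(l_0) \leq 1/m$ in case (1) and $\beta(l_0) \leq \max\{1/(m+1),(m+1)/d\} < 1/m$ in case (2).

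The only part requiring any real care is packaging $(X_0,l_0)$ into a family to which \autoref{thm_semicontinuity} applies. This reduces to the standard fact that polarized abelian surfaces of type $(1,d)$ admit an irreducible coarse moduli space; a family passing through $(X_0,l_0)$ whose general fiber represents a general such surface is then obtained after a suitable (possibly étale) base change. I do not expect any essential obstacle beyond this mild bookkeeping.
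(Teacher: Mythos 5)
Your proposal is correct and follows essentially the same route as the paper: the lower bound from \autoref{lem_divisor}~(1), and the upper bounds by producing the split examples of \autoref{lem_example_surface_d,d'} via exactly the same division-with-remainder decompositions $d=(m-1)s+r$ and $d=ms+r$, then transferring the bound to a general $(X,l)$ by \autoref{thm_semicontinuity}. The only cosmetic differences are your handling of $m=1$ (the paper cites $\beta(l)\leq 1$ from \cite{MR4157109}, which amounts to the same thing) and your slightly more explicit remark about packaging the example into an irreducible family, which the paper leaves implicit.
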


\begin{proof}
The lower bound $ 1/\sqrt[g]{d} \leq \beta(l) $ follows from \autoref{lem_divisor} (i).
Thus
it suffices to find an example $(X,l)$ of type $(1,d) $ which satisfies the upper bound in (1) or (2) 
by \autoref{thm_semicontinuity}.
We construct such examples as $(E  \times E', l_{a,b})$ by choosing suitable $k,a,b$ in \autoref{lem_example_surface}.

\vspace{2mm}
\ni
(1) Since (1) is true for $m =1$, we may assume $m \geq 2$.
Write $d= (m-1) q+r$ for integers $q,r$ with $1 \leq r \leq m-1$ and set
\[
k=q-r , \quad a=r, \quad b=m-1.
\]
We note that $k$ is positive since $q \geq m+1 > r$ by $d \geq m^2$,
and 
\[
a+ab +bk=r+ r (m-1) +(m-1)(q-r) = r+(m-1)q=d.
\]
Hence by \autoref{lem_example_surface},
$(E  \times E', l_{a,b})$ for these $k,a,b$ is of type $(1,d)$ and 
\begin{align*}
\beta(l_{a,b}) &\leq \max\left\{    \frac{1}{1+b} , \frac{1+b}{d}\right\} =  \max \left\{ \frac{1}{m} , \frac{m}{d} \right\} \leq \frac1m
\end{align*}
since $d \geq m^2$.

\vspace{2mm}
\ni
(2) Write $d= m q+r$ for integers $q,r$ with $1 \leq r \leq m$ and set
\[
k=q-r , \quad a=r, \quad b=m.
\]
We note that $k $ is positive since $q \geq m+1 >r$ by $d \geq m^2+m+1$,
and 
\[
a+ab +bk=r+ r m +m(q-r) = r+mq =d.
\]
Hence by \autoref{lem_example_surface},
$(E  \times E', l_{a,b})$ for these $k,a,b$ is of type $(1,d)$ and 
\begin{align*}
\beta(l_{a,b}) &\leq \max\left\{    \frac{1}{1+b} , \frac{1+b}{d}\right\} =  \max \left\{ \frac{1}{m+1} , \frac{m+1}{d} \right\} =\frac{m+1}{d}< \frac1m
\end{align*}
since $m^2 +m+1 \leq d < (m+1)^2$.
\end{proof}

As stated in Introduction,
a very ample line bundle $L$ of type $(1,d)$ with $d \geq 7$ on an abelian surface $X$ satisfies $(N_0)$, that is,
$L$ is projectively normal by \cite{Lazarsfeld_proj_normal}, \cite{MR2076454}.
Furthermore, in this case the homogeneous ideal of $X $ embedded by $|L|$ is generated by quadrics and cubics by \cite{MR3656291}.
For $d \geq 10$, a general polarized abelian surface of type $(1,d)$ satisfies $(N_1)$, that is,
the homogeneous ideal of $X $ embedded by $|L|$ is generated by quadrics by \cite{MR1602020}.

By \autoref{prop_estimate_beta_surface}, we can show \autoref{thm_intro} for abelian surfaces,
which partially recovers and generalizes the above results to higher syzygies.
Partial means that we cannot give explicit conditions for the generality of $(X,L)$ 
and the bound $d \geq 13$ for $(N_1) $ is larger than the bound $d \geq 10$ in \cite{MR1602020}.

\begin{cor}[{$=$ \autoref{thm_intro} for $g=2$}]\label{cor_N_p}
Let $p \geq -1$ be an integer and 
let $(X,L)$ be a general polarized abelian surface of type $(1,d)$.
Then 
\begin{enumerate}
\item $(N_p)$ holds for $L$ if $d \geq (p+2)^2+(p+2) +1 =p^2+5p+7$.
\item The homogeneous ideal of $X \subset \P^{d-1}$ embedded by $|L|$ is generated by quadrics and cubics if $d \geq 7$.
\end{enumerate}

\end{cor}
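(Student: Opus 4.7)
The plan is to read Corollary \ref{cor_N_p} as a direct packaging of \autoref{prop_estimate_beta_surface} through the syzygy criterion \autoref{thm_ Bpf_threshold}, with an appeal to \cite{MR3656291} for the cubics statement. Nothing new has to be proved; the only content is to line up the numerology correctly.

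For part (1), the key observation is the algebraic identity
\[
p^2+5p+7 = (p+2)^2 + (p+2) + 1,
\]
so setting $m:=p+2$ turns the hypothesis $d\geq p^2+5p+7$ into $d\geq m^2+m+1$. Since $p\geq -1$ forces $m\geq 1$, I am precisely in the regime of \autoref{prop_estimate_beta_surface}(2), which supplies the strict inequality $\beta(l) < 1/m = 1/(p+2)$ for general $(X,l)$ of type $(1,d)$. Feeding this into \autoref{thm_ Bpf_threshold} yields property $(N_p)$. (The boundary cases $p=-1$ and $p=0$ also fall out of \autoref{thm_DHS}(1) and the older literature, but the argument above handles them uniformly.)

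For part (2), I would first apply part (1) with $p=0$: the hypothesis $d\geq 7$ is exactly $d\geq 0^2+5\cdot 0 + 7$, hence $L$ is projectively normal on a general $(X,L)$ of type $(1,d)$. Since $d\geq 7 > 4 = 2^g$, \autoref{thm_DHS}(3) guarantees $L$ is very ample, so the embedding $X\subset\mathbb{P}^{d-1}$ defined by $|L|$ is genuinely a closed embedding of a projectively normal surface. Then the main theorem of \cite{MR3656291} (which applies to any very ample line bundle of type $(1,d)$ with $d\geq 7$ on an abelian surface) tells us the homogeneous ideal of $X$ is generated in degrees $\leq 3$, i.e.\ by quadrics and cubics.

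There is essentially no obstacle: the only thing to verify is the algebraic identity $(p+2)^2+(p+2)+1 = p^2+5p+7$ and that the hypothesis $d\geq 7$ is strong enough to simultaneously invoke \autoref{thm_DHS}(3) (very ampleness) and \autoref{prop_estimate_beta_surface}(2) with $m=2$ (projective normality), so that \cite{MR3656291} becomes applicable.
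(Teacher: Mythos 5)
Your part (1) is exactly the paper's argument: take $m=p+2$, note $d\geq m^2+m+1$, get $\beta(l)<1/(p+2)$ from \autoref{prop_estimate_beta_surface}(2), and conclude by \autoref{thm_ Bpf_threshold}. For part (2), however, you take a genuinely different route from the paper, and one that undercuts the point of the corollary. You establish very ampleness via \autoref{thm_DHS}(3) and projective normality via part (1) with $p=0$, and then simply cite \cite{MR3656291} for the degree bound on the generators of the ideal. That is logically valid (Agostini's theorem does cover this case), but the corollary is explicitly framed in the surrounding text as \emph{recovering} the results of \cite{Lazarsfeld_proj_normal}, \cite{MR2076454}, \cite{MR3656291} from the basepoint-freeness threshold, so quoting \cite{MR3656291} makes part (2) circular in spirit: you are not deriving it from \autoref{prop_estimate_beta_surface} at all. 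The paper instead argues internally: generation by quadrics and cubics is equivalent to the vanishing of the Koszul cohomology groups $K_{1,q}(X;L)$ for all $q\geq 3$ (a standard fact, see \cite[p.~606]{MR2995182}), and this vanishing follows from $\beta(l)<1/2$ for $d\geq 7$ (which is \autoref{prop_estimate_beta_surface}(2) with $m=2$, since $m^2+m+1=7$) combined with \cite[Proposition 2.5]{Ito:2020aa}. What the paper's route buys is a self-contained proof within the threshold framework, illustrating that the method yields partial syzygy information beyond the $(N_p)$ ladder; what your route buys is brevity, at the cost of proving nothing new. If you want part (2) to carry the intended content, replace the appeal to \cite{MR3656291} with the Koszul-cohomology vanishing argument.
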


\begin{proof}
(1) follows from \autoref{thm_ Bpf_threshold} and \autoref{prop_estimate_beta_surface}.
For (2), it suffices to see that the Koszul cohomology group $K_{1,q}(X;L)=K_{1,q}(X,\calo_X;L)$ vanishes for any $q \geq 3$ (see \cite[p.\ 606]{MR2995182} for example),
which follows from $\beta(l) < 1/2 $ for $d \geq 7$ and \cite[Proposition 2.5]{Ito:2020aa}.
\end{proof}

\begin{rem}\label{rem_surface}
(1) Applying \cite[Theorem 1.2]{MR3923760}, which is a slight generalization of \cite[Theorem 1.1]{MR4009173}, to a general polarized abelian surface $(X,L)$ of type $(1,d)$,
we see that $(N_p)$ holds for $L$
if $d > 2(p+2)^2$.
\autoref{cor_N_p} improves the bound by  a factor of approximately $2$ for $p \gg 1$.\\
(2) Though the bound in \autoref{cor_N_p} (1) is a quadratic of $p$,
M.~Gross and S.~Popescu conjecture the following linear bound:
\cite[Conjecture (b)]{MR1602020} states  that  $L$ satisfies $(N_{\lfloor d/2 \rfloor -4} )$
for a general polarized abelian surface $(X,L)$ of type $(1,d)$ with $d \geq 10$.
Equivalently,
$(N_p) $ holds for such $L$ if $d  \geq 2p+8 \geq 10$.
\end{rem}

\section{On general polarized abelian varieties of type $(1,\dots,1,d)$}\label{sec_any_dim}

In this section,
we prove \autoref{main_thm}.
Although the argument becomes a little complicated,
the essential idea is the same as the surface case. 
In the following examples,
it is not difficult to bound $\beta(l)$.
To show that the type is of the form $(1,\dots,1,d)$,
we use \autoref{lem_example_surface}.

\vspace{1mm}
Let $g \geq 2, k_1,\dots,k_{g-1} \geq 1$ be integers and set $k_g:=1$.
Let $E_g$ be an elliptic curve
and take an elliptic curve $E_i$ and an isogeny 
$ f_i : E_i  \ra E_g$ for each $1 \leq i \leq g-1$ with $\ker f_i \simeq \Z/ k_i \Z$.
Then we have   
$
\hat{f}_i \circ f_i = \mu^{E_i}_{k_i} ,  f_i \circ \hat{f}_i =\mu^{E_g}_{k_i} 
$
for the dual isogeny $\hat{f}_i : E_g \ra E_i$.
Let $X=E_1 \times E_2 \times \dots \times E_g$ and 
let $F_i = \pr_i^* (o_{E_i})$,
where $\pr_i : X \rightarrow E_i$ is the projection to the $i$-th factor. 
A divisor $\Gamma$ on $X$ is defined by
\[
\Gamma = \left\{ \Big(  p_1,\dots,p_{g-1}, \sum_{i=1}^{g-1} f_i(p_i) \Big) \in X \, \Big| \, p_i \in E_i \text{ for }  1 \leq i \leq g-1 \right\}.
\]

The following proposition is a generalization of \autoref{lem_example_surface} (ii), (iii)
to higher dimensions:

\begin{prop}\label{prop_example_type(1,..., d)}
Under the above setting, 
let $l=l_{a,b} $ be the class of 
\[
D_{a,b}= a F_1 + \sum_{i=2}^{g-1} F_i + b F_g +\Gamma
\]
for integers $a,b \geq 0$ with $(a,b) \neq (0,0)$.
Set $N_i:=\sum_{j=i+1}^{g} k_j$ for $1 \leq i \leq g-1$ and $N_g:=0$.
Then
\begin{enumerate}
\item The type of $l$ is $(1,\dots,1,  d)$
for $d= a+ab N_1+b k_1$.
\item It holds that
\[
\frac{1}{1+b} \leq \beta(l) \leq \max\left\{  \max_{1 \leq i \leq g-1}  \frac{1+b N_{i+1}}{1+bN_i} , \frac{1+bN_1}{d}\right\}.
\]
\end{enumerate}
\end{prop}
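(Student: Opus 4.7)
The plan is to prove both parts simultaneously by induction on $g \geq 2$, with \autoref{lem_example_surface} supplying the base case $g = 2$ (specializing via $N_1 = k_g = 1$ and $N_2 = 0$, so that the statements collapse to parts (iii) and (iv) of the surface lemma). For the inductive step, I restrict $l$ to the codimension-one abelian subvariety $S = \{o_{E_1}\} \times E_2 \times \cdots \times E_g \simeq E_2 \times \cdots \times E_g$. Since $F_1|_S$ is numerically trivial and the remaining divisors restrict to their obvious analogues on $S$, the line bundle $l|_S$ is precisely the divisor of the proposition applied to the $(g-1)$-fold product with shifted parameters $a' = 1$, $b' = b$, $k'_j = k_{j+1}$, so the induction hypothesis supplies the type of $l|_S$ (namely $(1, \ldots, 1, 1 + bN_1)$) together with a bound on $\beta(l|_S)$.

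To prove part (1), I would first verify ampleness of $l$ (via positivity of the associated Hermitian form or a Nakai-type check exploiting the product structure), then compute $\chi(l)$ by expanding $(l^g)$ multinomially. Every monomial has the form $F_I \cdot \Gamma^{g-|I|}$; whenever $g - |I| \geq 2$, the restriction $\Gamma|_{F_I}$ is (up to translation) a codimension-one abelian subvariety of $F_I$, so $(\Gamma|_{F_I})^{g-|I|} = 0$ by numerical triviality of translates of subgroup divisors. Only the $\Gamma^0$ and $\Gamma^1$ contributions survive, producing $(l^g) = g!\,d$ and hence $\chi(l) = d$. To upgrade this to the assertion that the type is exactly $(1, \ldots, 1, d)$, I would compute the polarization morphism $\phi_l : X \to \widehat{X}$ component by component, generalizing \autoref{claim_Gamma}; the calculation on each factor is formally identical to the surface case and yields $\phi_\Gamma(p)_j = h_j(p_g - \sum_i f_i(p_i))$ for $j < g$ together with $\phi_\Gamma(p)_g = \sum_i f_i(p_i) - p_g$, while $\phi_{F_i}(p) = -p_i$ in the $i$-th coordinate. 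Reducing the resulting system for $K(l) = \ker \phi_l$ collapses everything to the single constraint $d \cdot p_g = 0$ together with explicit expressions for the remaining coordinates in terms of $p_g$; since $|K(l)| = d^2$, lifting two independent order-$d$ elements of $E_g[d]$ exhibits $K(l) \simeq (\Z/d)^2$, which forces the type to be $(1, \ldots, 1, d)$.

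For part (2), I invoke \autoref{lem_divisor}(2) with $S$ as above to obtain $\beta(l) \leq \max\{\beta(l|_S), \chi(l|_S)/\chi(l)\}$. The induction hypothesis applied to $l|_S$ (using the shift $N'_j = N_{j+1}$) gives $\beta(l|_S) \leq \max\{\max_{2 \leq j \leq g-1}(1 + bN_{j+1})/(1 + bN_j), (1 + bN_2)/(1 + bN_1)\}$, and since the second term is precisely the $j = 1$ case of the first, this collapses to $\max_{1 \leq j \leq g-1}(1 + bN_{j+1})/(1 + bN_j)$. Part (1) provides $\chi(l) = d$ and $\chi(l|_S) = 1 + bN_1$, so $\chi(l|_S)/\chi(l) = (1 + bN_1)/d$, and the desired bound follows.

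The main obstacle will be confirming that the type is exactly $(1, \ldots, 1, d)$, rather than some coarser sequence of invariant factors with the same $\chi$; in higher dimension, primitivity together with $\chi(l) = d$ no longer suffices. The required analysis of $K(l)$ must simultaneously track torsion contributions from $E_g[d]$, $\ker f_1$, and the images of the isogenies $h_j$. The surface case handles the relevant cancellation cleanly within a single pair of equations, and the key technical point will be to verify that the analogous higher-dimensional cancellation forces $K(l)$ to decompose as a product of two cyclic groups of order $d$.
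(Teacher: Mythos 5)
Your part (2) and your computation of $\chi(l)=d$ are correct and essentially coincide with the paper's argument: the paper applies \autoref{lem_divisor}~(2) repeatedly along the flag $X_i=F_1\cap\dots\cap F_i$ and computes $\chi(l|_{X_i})=1+bN_i$ directly, which is the unrolled form of your induction via $S=F_1$; the multinomial expansion using $F_i^2=\Gamma^2=0$ is also what the paper does. The genuine gap is in your determination of the group structure of $K(l)$, which you yourself flag as the main obstacle but do not resolve, and the route you sketch does not work. Writing $A=p_g-\sum_i f_i(p_i)$, the system defining $K(l)$ reduces to $p_i=-bh_i(p_g)$ for $2\leq i\leq g-1$, $ap_1=-bh_1(p_g)$ and $f_1(p_1)=(1+bN_1)p_g$; this does \emph{not} express $p_1$ as a function of $p_g$. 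Concretely, the kernel of the projection $K(l)\to E_g$ consists of the points $(p_1,o,\dots,o)$ with $p_1\in E_1[a]\cap\ker f_1$, a cyclic group of order $\gcd(a,k_1)$ (all of $\ker f_1\simeq\Z/k_1\Z$ when $a=0$), so already for $g=2$, $a=k_1=2$, $b=1$ the projection is neither injective nor surjective onto $E_g[d]$. Consequently "$K(l)$ collapses to $d\cdot p_g=0$ with the other coordinates determined" is false, and since $|K(l)|=d^2$ is compatible with types other than $(1,\dots,1,d)$, the cardinality count cannot close the argument.

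The paper's resolution is the one missing ingredient you would need: it sandwiches $K(l)$ between $N_1K'$ and $K'$, where
\[
K'=\Set{(p_1,p_g)\in E_1\times E_g \;\Big|\; h_1(p_g)=(aN_1+k_1)p_1,\ f_1(p_1)=(1+bN_1)p_g}
\]
is recognized, via \autoref{lem_example_surface}~(ii), as the kernel group $K(l_{aN_1,bN_1})$ of a \emph{surface} polarization on $E_1\times E_g$ of type $(1,N_1d)$ by \autoref{lem_example_surface}~(iii). Hence $K'\simeq(\Z/N_1d\Z)^{\oplus2}$, so $N_1K'\simeq(\Z/d\Z)^{\oplus2}$, and the inclusions $N_1K'\subset K(l)\subset K'$ together with $|K(l)|=d^2$ force $K(l)=N_1K'\simeq(\Z/d\Z)^{\oplus2}$. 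Without this (or an equivalent analysis of the torsion coming from $\ker f_1$ and $E_1[a]$), your proof of part (1) is incomplete.
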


\begin{ex}\label{rem_example}
Before proving \autoref{prop_example_type(1,..., d)},
we give an example.
If $g=3$, \autoref{prop_example_type(1,..., d)}
state that $l_{a,b}$ is of type $(1,1,a+ab+abk_2+bk_1)$ and
\begin{align}\label{eq_3fold}
\frac{1}{1+b} \leq \beta(l_{a,b}) \leq \max \left\{  \frac{1}{1+b}, \frac{1+b}{1+b+ bk_2}, \frac{1+b +b k_2}{a(1+b+bk_2)+bk_1} \right\}.
\end{align}
For example, if we take $a=1, b=3,k_1= 9,k_2=3$, then
$l_{1,3}$ is of type $(1,1,40)$ and $1/4 \leq \beta(l_{1,3}) \leq \max\{ 1/4,4/13,13/40\} =13/40 < 1/3$ holds.
\end{ex}

To show \autoref{prop_example_type(1,..., d)},
we prepare two lemmas.
We denote by $o_i \in E_i$ the origin of $E_i$.
By $E_i\stackrel{\sim}{\rightarrow} \widehat{E}_i : p_i \mapsto \calo_{E_i}(p_i -o_i)$, 
we may identify
$\widehat{X} =\widehat{E}_1 \times \dots \times \widehat{E}_g $ with $X$
as in the proof of \autoref{lem_example_surface}.
Under this identification,
the point in $\widehat{X}=X$ corresponding to $t_x^* \calo_X(F_i) \otimes \calo_X(-F_i)$ for $x=(p_1,\dots,p_g) \in X$ is 
$(o_1,\dots,o_{i-1},-p_i,o_{i+1},\dots,o_g)$ since 
\begin{align}\label{eq_F_i}
t_x^* \calo_X(F_i) \otimes \calo_X(-F_i) =\pr_i^* \calo_{E_i}((-p_i) - o_i).
\end{align}

The following is a generalization of \autoref{claim_Gamma}  to higher dimensions:

\begin{lem}\label{lem_gamma}
For $x=(p_1,\dots,p_g) \in X$,
$t_x^* \calo_X(\Gamma) \otimes \calo_X(-\Gamma)$ corresponds to
\[
( \hat{f}_1(A), \dots, \hat{f}_{g-1}(A),-A) \in X= \widehat{X},
\]
where $A=p_g -\sum_{i=1}^{g-1} f_i(p_i) \in E_g$.
\end{lem}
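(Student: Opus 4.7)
The plan is to mimic the proof of Claim \ref{claim_Gamma}. Under the identification $\widehat{X} = X$, a line bundle on $X$ algebraically equivalent to zero is determined by its restrictions to the $g$ coordinate subvarieties
\[
F^{(i)} := \{o_1\} \times \cdots \times E_i \times \cdots \times \{o_g\} \qquad (1 \leq i \leq g),
\]
since each such restriction recovers the $i$-th component of the corresponding point of $\widehat{X}$. So the strategy is to compute $(t_x^* \calo_X(\Gamma) \otimes \calo_X(-\Gamma))|_{F^{(i)}}$ for every $i$ and assemble the $g$ answers.

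First I would unfold the defining condition of $t_x^* \Gamma$: since $y + x \in \Gamma$ iff $y_g + p_g = \sum_{i=1}^{g-1} f_i(y_i + p_i)$ and each $f_i$ is a homomorphism, $t_x^* \Gamma$ is the divisor cut out by $y_g = \sum_{i=1}^{g-1} f_i(y_i) - A$. For the last axis $F^{(g)}$, using $f_i(o_i) = o_g$ both $t_x^* \Gamma$ and $\Gamma$ meet $F^{(g)}$ transversely in the single points $(o_1,\dots,o_{g-1},-A)$ and $(o_1,\dots,o_{g-1},o_g)$, so the restriction to $F^{(g)} \simeq E_g$ is $\calo_{E_g}((-A) - o_g)$, corresponding to $-A \in \widehat{E}_g = E_g$.

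For each $1 \leq i \leq g-1$, the axis $F^{(i)}$ meets $\Gamma$ along $\{y_i \in E_i : f_i(y_i) = o_g\} = \ker f_i$ and meets $t_x^* \Gamma$ along $f_i^{-1}(A)$. After fixing a point $y_i^* \in f_i^{-1}(A)$, so that $f_i^{-1}(A) = y_i^* + \ker f_i$, the difference of the two zero-cycles on $E_i$ equals
\[
\sum_{\varepsilon \in \ker f_i} (y_i^* + \varepsilon) - \sum_{\varepsilon \in \ker f_i} \varepsilon = k_i y_i^* = h_i \circ f_i(y_i^*) = h_i(A)
\]
by \ref{eq_f_i,h_i}, so the $i$-th component in $\widehat{E}_i = E_i$ is $h_i(A)$. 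Combining the $g$ restrictions yields the claimed description $(h_1(A),\dots,h_{g-1}(A),-A)$.

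The work is essentially bookkeeping rather than a new idea; the only point that requires genuine care is the passage from a $k_i$-fold sum on $E_i$ to $h_i(A)$, where one needs the relation $h_i \circ f_i = \mu^{E_i}_{k_i}$, exactly as in the surface case. A convenient sanity check is $g=2$: then $A = q - f(p)$ and the formula $(h_1(A), -A)$ specializes to $(h(q) - kp, f(p) - q)$, recovering Claim \ref{claim_Gamma}.
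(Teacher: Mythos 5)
Your proposal is correct and takes essentially the same route as the paper's own proof: restrict the numerically trivial bundle $t_x^* \calo_X(\Gamma) \otimes \calo_X(-\Gamma)$ to each coordinate axis, compute the zero-cycles $t_x^*\Gamma \cap E'_i = f_i^{-1}(A)$ and $\Gamma \cap E'_i = \ker f_i$ (resp.\ the single points $-A$ and $o_g$ on the last axis), and use $h_i \circ f_i = \mu^{E_i}_{k_i}$ to identify the difference with $h_i(A)$. The only cosmetic difference is that you describe $t_x^*\Gamma$ by its defining equation rather than as an explicitly translated parametrized set, which changes nothing.
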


\begin{proof}
Set $E'_i=\{(q_1,\dots, q_g) \, | \, q_j=o_{g} \text{ for } j \neq i\} \subset Y:=E_g \times \dots \times E_g$
and consider a divisor
\[
\Delta' = \left\{ \Big(  q_1,\dots,q_{g-1}, \sum_{i=1}^{g-1} q_i \Big)  \, \Big| \, q_i \in E_g \text{ for }  1 \leq i \leq g-1 \right\} \subset Y=E_g \times \dots \times E_g.
\]
For $y=(q_1,\dots,q_g) \in  Y$,
we see that
\[
t_y^* \Delta' \cap E'_i =\{ A'\} \text{ for } 1 \leq i \leq g-1, \quad t_y^* \Delta' \cap E'_g =\{ -A'\} ,  \quad  \Delta' \cap E'_i =\{ o_{g}\} \text{ for } 1 \leq i \leq g
\]
for $A'=q_g-\sum_{i=1}^{g-1} q_i \in E_g$
under the natural identification $E'_i \simeq E_g$.
Hence $t_y^* \calo_Y(\Delta') \otimes \calo_Y(-\Delta')$ corresponds to $(A',\dots,A',-A') \in \widehat{Y}=Y$.

By definition,
$\Gamma$ is the pullback of $\Delta'$ by $(f_1,\dots,f_{g-1},\id_{E_g}) : X \ra Y$.
Hence
this lemma follows from the same argument as in \autoref{claim_Gamma} using
the commutative diagram
\[
\xymatrix@C=36pt
{
X\ar[r]^-{\varphi_{\Gamma}} \ar[d]_-{(f_1,\dots,f_{g-1},\id_{E_g})} & \widehat{X} = X\\
Y \ar[r]^-{\varphi_{\Delta'}} & \widehat{Y} = Y . \ar[u]_-{(\hat{f}_1,\dots,\hat{f}_{g-1}, \id_{E_g})} \\
 }
\]
In fact,
$A'$ for $(q_1,\dots,q_g) =(f_1(p_1),\dots,f_{g-1} (p_{g-1}), p_g )$ is nothing but $A=p_g -\sum_{i=1}^{g-1} f_i(p_i)  $.
Thus
$(p_1,\dots,p_g) \in X$ is mapped as
\[
 (p_1,\dots,p_g) \mapsto (f_1(p_1),\dots,f_{g-1} (p_{g-1}), p_g ) \mapsto (A,\dots,A,-A) \mapsto ( \hat{f}_1(A), \dots, \hat{f}_{g-1}(A),-A)
\]
by $ \varphi_{\Gamma} =(\hat{f}_1,\dots,\hat{f}_{g-1}, \id_{E_g}) \circ \varphi_{\Delta'} \circ (f_1,\dots,f_{g-1},\id_{E_g}) $.
\end{proof}

\begin{lem}\label{lem_K(l)}
Under the setting of \autoref{prop_example_type(1,..., d)},
it holds that
\[
K(l_{a,b}) \simeq  \left\{ (p_1,p_g ) \in E_1 \times E_g \, \Big| \,a p_1= -b \hat{f}_1(p_g) , f_1(p_1) =\left(1+bN_1 \right)p_g  \right\}.
\]
\end{lem}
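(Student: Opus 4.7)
The plan is to compute the polarization map $\phi_{l_{a,b}} \colon X \to \widehat{X}$ explicitly and then read off its kernel, which is by definition $K(l_{a,b})$. Since $l \mapsto \phi_l$ is a homomorphism from $\mathrm{NS}(X)$ to $\Hom(X,\widehat{X})$, I would add up the contributions of the summands $aF_1$, $F_2, \dots, F_{g-1}$, $bF_g$, and $\Gamma$ of $D_{a,b}$ separately, using \ref{eq_F_i} for the $F_i$-terms and \autoref{lem_gamma} for the $\Gamma$-term. Writing $x = (p_1, \dots, p_g)$ and $A = p_g - \sum_{i=1}^{g-1} f_i(p_i) \in E_g$ as in \autoref{lem_gamma}, this produces, under $\widehat{X} = X$, a point $\phi_{D_{a,b}}(x) \in X$ whose first coordinate is $-a p_1 + h_1(A)$, whose intermediate coordinates are $-p_i + h_i(A)$ for $2 \le i \le g-1$, and whose last coordinate is $-b p_g - A$.

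Next I would impose that all these coordinates vanish. The last equation gives $A = -b p_g$ at once. Feeding this into the intermediate equations forces $p_i = h_i(A) = -b h_i(p_g)$ for $2 \le i \le g-1$, so $p_2, \dots, p_{g-1}$ are determined uniquely by $p_g$. In particular the projection $K(l_{a,b}) \to E_1 \times E_g$, $x \mapsto (p_1, p_g)$, is injective, realizing $K(l_{a,b})$ as a subgroup of $E_1 \times E_g$. The first coordinate equation then reads $a p_1 = h_1(A) = -b h_1(p_g)$, which is the first of the two conditions in the statement.

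For the second condition $f_1(p_1) = (1 + bN_1) p_g$, the plan is to substitute the formulas for $p_2, \dots, p_{g-1}$ back into the definition of $A$. Since $f_i \circ h_i = \mu^{E_i}_{k_i}$ by \ref{eq_f_i,h_i}, each term $f_i(-b h_i(p_g))$ simplifies to $-b k_i p_g$. Combining $A = -b p_g$ with the resulting expression for $A$ and solving for $f_1(p_1)$ gives $f_1(p_1) = (1 + b + b \sum_{i=2}^{g-1} k_i) p_g$. Using $k_g = 1$ so that $N_1 = \sum_{j=2}^g k_j = 1 + \sum_{i=2}^{g-1} k_i$ rewrites the coefficient as $1 + bN_1$, as required.

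The only real obstacle is arithmetic bookkeeping: one must track signs carefully and must recognize that the resulting coefficient matches $1 + bN_1$ precisely because of the convention $k_g = 1$ baked into the definition of $N_1$. Conceptually this is just a direct linear computation using the formulas provided by \ref{eq_F_i} and \autoref{lem_gamma}, with no further geometric input needed.
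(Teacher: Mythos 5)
Your proposal follows essentially the same route as the paper: compute the point of $\widehat{X}=X$ corresponding to $t_x^*\calo_X(D_{a,b})\otimes\calo_X(-D_{a,b})$ by summing the contributions from \ref{eq_F_i} and \autoref{lem_gamma}, set it equal to $o_X$, eliminate $p_2,\dots,p_{g-1}$ via $p_i=-bh_i(p_g)$, and use $f_i\circ h_i=\mu^{E_g}_{k_i}$ together with $k_g=1$ to turn the last relation into $f_1(p_1)=(1+bN_1)p_g$. The only (minor) point the paper makes explicit that you gloss over is the converse direction — that any $(p_1,p_g)$ satisfying the two conditions, with $p_i:=-bh_i(p_g)$, does lie in $K(l_{a,b})$ — which is needed for the stated isomorphism but follows by reversing the same computation.
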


\begin{proof}
By \ref{eq_F_i} and \autoref{lem_gamma},
$t_x^* \calo_{X}(D_{a,b}) \otimes \calo_X(-D_{a,b})$ for $x=(p_1,\dots,p_g)$ corresponds to the point
\begin{align}\label{eq_point}
(-a p_1+\hat{f}_1(A), -p_2+\hat{f}_2(A),\dots, -p_{g-1} +\hat{f}_{g-1}(A), -b p_g -A) \in X.
\end{align}
Hence $(p_1,\dots,p_g)$ is contained in $K(l_{a,b}) $ if and only if
\begin{align*}
& a p_1 =\hat{f}_1(A) ,   \quad p_i =\hat{f}_i(A ) \  \text{ for } \ 2 \leq i \leq g-1,  \quad  A =-b p_g\\
\lra \quad & a p_1=\hat{f}_1(-bp_g) , \quad p_i =\hat{f}_i(-b p_g) \  \text{ for }  \  2 \leq i \leq g-1, \quad A =-b p_g\\
\lra \quad & a p_1=-b\hat{f}_1(p_g) , \quad  p_i = - b \hat{f}_i( p_g) \ \text{ for } \ 2 \leq i \leq g-1,   \quad A =-b p_g.
\end{align*}
Under the condition $p_i = - b \hat{f}_i( p_g)  $ for $2 \leq i \leq g-1$, it holds that
\begin{align*}
b p_g+A= b p_g + p_g -\sum_{i=1}^{g-1} f_i(p_i)
&=b p_g + p_g - \sum_{i=2}^{g-1} f_i(-b \hat{f}_i(p_g)) -f_1 (p_1)\\
&=b p_g + p_g +b \sum_{i=2}^{g-1} f_i (\hat{f}_i(p_g)) -f_1 (p_1)\\
&=b p_g + p_g + b \sum_{i=2}^{g-1} k_i p_g -f_1(p_1)\\
&= \left(1+b\left(1+\sum_{i=2}^{g-1} k_i \right) \right)p_g -f_1(p_1)\\
&=\left(1+bN_1 \right)p_g -f_1(p_1),
\end{align*}
where the fourth equality follows from $f_i \circ \hat{f}_i =\mu^{E_g}_{k_i}$ 
and the last one follows from $k_g=1$ and $N_1=\sum_{i=2}^g k_i $.
Hence 
we have
\begin{align*}
K(l_{a,b}) &=\{ (p_1,\dots,p_g) \, | \, a p_1=-b\hat{f}_1(p_g), p_i = - b \hat{f}_i( p_g)   \text{ for }  2 \leq i \leq g-1, f_1(p_1) = \left(1+bN_1 \right)p_g    \}\\
&\simeq \{ (p_1,p_g) \, | \, a p_1=-b\hat{f}_1(p_g), f_1(p_1) = \left(1+bN_1 \right)p_g    \}.
\end{align*}
\end{proof}

\begin{proof}[Proof of \autoref{prop_example_type(1,..., d)}]
(1) Since $F_i, \Gamma \subset X$ are abelian subvarieties of codimension one,
$F_i^2=\Gamma^2=0$ as cycles on $X$.
By the definition of $\Gamma$,
it is easy to see that
\[
(F_1\cdots F_g) =(F_1\cdots F_{g-1}.\Gamma) =1, \quad (F_1 \cdots F_{i-1}.F_{i+1} \cdots F_g.\Gamma) =k_i \ \text{  for  } \ 1 \leq i \leq g-1. 
\]
Hence we have
\begin{align*}
\chi(l)=\frac{(l^g)}{g!} &= b (F_2\cdots F_g.\Gamma) + ab \sum_{i=2}^{g-1} (F_1\cdots F_{i-1}.F_{i+1}\cdots  F_g.\Gamma)  \\
&\hspace{60mm} + a (F_1\cdots F_{g-1}.\Gamma) + ab (F_1\cdots F_g) \\
&=b k_1 + ab \sum_{i=2}^{g-1} k_i + a  + ab\\
&= a+ab\left(1+\sum_{i=2}^{g-1} k_i  \right) +bk_1 =a+ab N_1 + b k_1=d .
\end{align*}
For simplicity, set $N:=N_1$. 
By \autoref{lem_K(l)},
we may identify $K(l)$ with
\begin{align*}
 \left\{ (p_1,p_g ) \in E_1 \times E_g \, \Big| \, a p_1= -b \hat{f}_1(p_g) , f_1(p_1) =(1+bN)p_g  \right\}.
\end{align*}
Consider a group
\begin{align*}
K'= \left\{ (p_1,p_g ) \in E_1 \times E_g \, \Big| \, a Np_1= -b N\hat{f}_1(p_g) , f_1(p_1) =(1+bN)p_g   \right\}.
\end{align*}
By definition, we have
\begin{align*}
NK' :=\{ (Np_1,Np_g) \, | \, (p_1,p_g) \in K' \} \subset K(l) \subset K'.
\end{align*}
Under the condition $  f_1(p_1) =(1+bN)p_g $,
\begin{align}\label{eq_claim_ample}
\begin{aligned}
a Np_1= -b N\hat{f}_1(p_g) \ &\Leftrightarrow \ a Np_1 +\hat{f}_1(f_1(p_1))= -b N\hat{f}_1(p_g) + \hat{f}_1((1+bN)p_g  )\\
&\Leftrightarrow  \ a Np_1 +k_1 p_1= -b N\hat{f}_1(p_g) + (1+bN) \hat{f}_1(p_g )\\
&\Leftrightarrow \  (a N+k_1) p_1=  \hat{f}_1(p_g ).
\end{aligned}
\end{align}
Hence we have
\[
K'= \left\{ (p_1,p_g ) \in E_1 \times E_g \, \Big| \,  \hat{f}_1(p_g) =(aN+k_1) p_1 ,  f_1(p_1) =(1+bN)p_g  \right\}.
\]
By \autoref{lem_example_surface} (iv), 
we have 
$K'=K(l')$ for $l':=l_{aN,bN}$ on $E \times E'$ with
 $E=E_1,E'=E_g, k=k_1 $ in \autoref{lem_example_surface}. 
By \autoref{lem_example_surface} (ii), $l'$ is of type $(1, aN+aNbN+ bNk_1) =(1,Nd)$ since $d= a+abN + bk_1$.
Hence 
$K'=K(l') \simeq (\Z / Nd \Z)^{ \oplus 2}$.
Since 
\[
K(l) \supset NK' \simeq (N\Z / Nd \Z)^{ \oplus 2} \simeq  (\Z / d \Z)^{ \oplus 2}
\]
and $|K(l)| = \chi(l)^2 =d^2$, we have $K(l) = N K' \simeq (\Z / d \Z)^{ \oplus 2}$.
Thus the type of $l$ is $(1,\dots,1, d)$. 

\vspace{1mm}
\noindent
(2) Set a flag $X=X_0 \supset X_1 \supset \dots \supset X_{g-1}$ of abelian subvarieties of $X$ as
\[
X_i=\{(p_1,\dots,p_g) \in X \, | \, p_j=o_j \text{ for } 1 \leq j \leq i\} = F_1 \cap \dots \cap F_{i} \subset X.
\]
Applying \autoref{lem_divisor} (ii) repeatedly,
we have
\[
\beta(l) \leq \max \left\{ \frac{1}{\chi(l|_{X_{g-1}})}, \frac{\chi(l|_{X_{g-1}})}{\chi(l|_{X_{g-2}})}, \dots, \frac{\chi(l|_{X_{1}})}{\chi(l|_{X_{0}})} \right\}
\]
since $\beta(l|_{X_{g-1}})   =1/\deg (l|_{X_{g-1}}) = 1/\chi(l|_{X_{g-1}})$ for the elliptic curve $X_{g-1}$.
Then the upper bound in (2) follows from
\[
\chi(l|_{X_i}) =\frac{(l|_{X_i}^{g-i})}{(g-i)!} = \frac{(F_1\cdots F_i.l^{g-i})}{(g-i)!} =1+bN_i
\]
for $1 \leq i \leq g-1$ and $\chi(l|_{X_0}) =\chi(l) =d$.
The lower bound $\beta(l) \geq 1/(1+b)$ holds by applying  \autoref{lem_divisor} (ii) to $Z= X_{g-1}$
since $ \beta( l|_{X_{g-1}}) = 1/\chi ( l|_{X_{g-1}}) =1/(1+b)$.
\end{proof}

Now we can show \autoref{main_thm}:

\begin{proof}[Proof of \autoref{main_thm}]
The lower bound $ 1/\sqrt[g]{d} \leq \beta(l) $ follows from \autoref{lem_divisor} (i).
If $g=1$,
this theorem holds since $ \beta(l) =1/\deg(l)$ in this case.
Thus
it suffices to find an example $(X,l)$ of type $(1,\dots,1,d) $ such that $ \beta (l) \leq 1/m $ or $\beta(l) < 1/m$ for $m=\lfloor \sqrt[g]{d} \rfloor$ and $g\geq 2$
by \autoref{thm_semicontinuity}.
We construct such examples as $(E_1 \times  \dots \times E_g, l_{a,b})$ for suitable $k_1,\dots,k_{g-1},a,b$.

We set $k_i= m^{g-i}$ for $2 \leq i \leq g-1$.
In this case, we have
\[
N_i=\sum_{j=i+1}^g k_j= \sum_{j=i+1}^g m^{g-j} =\sum_{n=0}^{g-i-1} m^n
\]
for $1 \leq i \leq g-1$ since $k_g=1$.
We take $k_1, a, b $ as follows.

\vspace{2mm}
\ni
(1) Write $d= (m-1) q+r$ for integers $q,r$ with $1 \leq r \leq m-1$ and set
\[
k_1=q-rN_1 , \quad a=r, \quad b=m-1.
\]
Since $d \geq m^g$, we have $q \geq m^{g-1 }  + \dots + m+ 1$.
Hence $k_1$ is positive by $N_1 =m^{g-2}+ \dots +m +1$ and $r \leq  m-1 $.
Furthermore,
\begin{align*}
1+b N_i &= 1+(m-1) \sum_{n=0}^{g-i-1} m^n = m^{g-i},\\
a+abN_1+bk_1 &= r+r(m-1)N_1 +(m-1) (q-rN_1) =r+(m-1)q=d.
\end{align*}
Hence by \autoref{prop_example_type(1,..., d)},
$(E_1\times \dots \times E_g, l_{a,b})$ for these $k_1,\dots,k_{g-1},a,b$ is of type $(1,\dots,1,d)$ and 
\begin{align*}
\beta(l_{a,b}) &\leq \max\left\{  \max_{1 \leq i \leq g-1}  \frac{1+b N_{i+1}}{1+bN_i} , \frac{1+bN_1}{d}\right\}\\
&=  \max \left\{ \frac{1}{m} , \frac{m}{m^2} , \dots ,\frac{m^{g-2}}{m^{g-1}} , \frac{m^{g-1}}{d} \right\} =\max  \left\{ \frac1m, \frac{m^{g-1}}{d}  \right\} \leq \frac1m
\end{align*}
since $1+bN_i=m^{g-i}$ and $d \geq m^g$.

\vspace{2mm}
\ni
(2) Write $d= m q+r$ for integers $q,r$ with $1 \leq r \leq m$ and set
\[
k_1=q-rN_1 , \quad a=r, \quad b=m.
\]
Since $d \geq m^g+ \dots +m+1$, we have $q \geq m^{g-1 }  + \dots + m+ 1$.
Hence $k_1$ is positive by $N_1 =m^{g-2}+ \dots +m +1$ and $r \leq  m $.
 Furthermore,
\begin{align*}
1+b N_i &= 1+m \sum_{n=0}^{g-i-1} m^n =\sum_{n=0}^{g-i} m^n,\\
a+abN_1+bk_1 &= r+rmN_1 +m (q-rN_1) =r+mq=d.
\end{align*}
Hence by \autoref{prop_example_type(1,..., d)},
$(E_1\times \dots \times E_g, l_{a,b})$ for these $k_1,\dots,k_{g-1},a,b$ is of type $(1,\dots,1,d)$ and 
\begin{align*}
\beta(l_{a,b}) &\leq \max\left\{  \max_{1 \leq i \leq g-1}  \frac{1+b N_{i+1}}{1+bN_i} , \frac{1+bN_1}{d}\right\} < \frac1m
\end{align*}
since
\[
\frac{1+bN_{i+1}}{1+bN_i} = \frac{m^{g-i-1} + \dots+m+1 }{ m^{g-i} + \dots +m+1} < \frac{1}{m}
\]
for $1 \leq i \leq g-1$ and
\[
\frac{1+b N_1}{d} = \frac{m^{g-1} + \dots+m+1 }{ d} \leq \frac{m^{g-1} + \dots+m+1 }{ m^{g} + \dots +m+1} < \frac{1}{m}
\]
by the assumption $ d \geq m^{g} + \dots +m+1$.
\end{proof}

\begin{proof}[Proof of \autoref{thm_intro}]
The proof is the same as that of \autoref{cor_N_p}:
The statement about $(N_p)$, in particular projective normality, follows from \autoref{thm_ Bpf_threshold} and \autoref{main_thm}.
The last statement about generators of homogenous ideal of $X$ 
follows from the vanishing $K_{1,q}(X;L)=0$ for any $q \geq 3$,
which follows from $\beta(l) < 1/2 $ for $d \geq 2^{g+1}-1$ and \cite[Proposition 2.5]{Ito:2020aa}.
\end{proof}

\begin{ex}\label{ex_3fold_low_deg}
If we choose $a,b,k_i$ carefully in \autoref{prop_example_type(1,..., d)},
we might obtain a better bound than that in \autoref{main_thm}.
For example, consider the case $g=3$ and $4 \leq d \leq 7$.
By \autoref{main_thm},
we know that $1/2 < 1/\sqrt[3]{d} \leq \beta(l) < 1$ for general $(X,l)$ of type $(1,1,d)$.
In fact,
we have the following bound by
taking $a=b=1$ and suitable $k_1,k_2$ in \ref{eq_3fold}:
\begin{itemize}
\item $d=4$ : $\beta(l) \leq 3/4$ by taking $(k_1,k_2)=(1,1)$.
\item $d=5$ : $\beta(l) \leq 2/3$ by taking $(k_1,k_2)=(2,1)$.
\item $d=6$ : $\beta(l) \leq 2/3$ by taking $(k_1,k_2)=(3,1)$ or $(2,2)$.
\item $d=7$ : $\beta(l) \leq 4/7$ by taking $(k_1,k_2)=(3,2)$. 
\end{itemize}
\end{ex}

\begin{rem}\label{rem_related_results}
We recall some related results and compare them to \autoref{thm_intro}.

\vspace{1mm}
\noindent
(1) For $p=0$, Iyer \cite[Theorem 1.2]{MR1974682} proves that 
an ample line bundle $L$ on a simple abelian variety of dimension $g$ is projectively normal if $\chi(L) > 2^g \cdot g!$.
Although \autoref{thm_intro} gives no explicit condition on the generality of $(X,L)$,
\cite{MR1974682} gives an explicit condition as $X$ is simple.
Furthermore,
\cite{MR1974682} has no assumption on the type of $L$.
On the other hand,
the bound $\chi(L) =d \geq 2^{g+1}-1$ in \autoref{thm_intro} is smaller than the bound in \cite{MR1974682}  
by a factor of approximately $g!/2$.

\vspace{1mm}
\noindent
(2) 
For $p \geq -1$,
R.~Lazarsfeld, G.~Pareschi and M.~Popa
\cite[Corollary B]{MR2833789} prove that 
$L$ satisfies $(N_p)$
if $\chi(L) > \frac{(4g)^g}{2g!} (p+2)^g$ and $(X,L)$ is very general.
\cite{MR2833789} also has no assumption on the type of $L$.
On the other hand,
the bound $\chi(L) \geq ((p+2)^{g+1}-1)/(p+1)$ in \autoref{thm_intro} is smaller than the bound in \cite{MR2833789} 
by a factor of approximately $ \frac{(4g)^g}{2g!} \cdot \frac{p+1}{p+2}$.

\vspace{1mm}
\noindent
(3) 
For $p \geq -1$,
the author \cite[Question 4.2]{MR3923760} asks if $L$ satisfies $(N_p)$
when $(L^{\dim Z}.Z) > ((p+2)\dim Z)^{\dim Z}$ holds for any abelian subvariety $Z \subset X$.
This question is answered affirmatively by \cite{MR3923760}, \cite{Ito:2020aa} for $g=2,3$ (see also \cite{MR4009173}, \cite{lozovanu:2018}).
In arbitrary dimension,
Z.~Jiang \cite[Theorem 1.5]{jiang2021} proves that $(N_p)$ holds for $L$
under the assumption  $(L^{\dim Z}.Z) > (2(p+2)\dim Z)^{\dim Z}$.

If this question has an affirmative answer for any $g \geq 1$,
$(N_p)$ holds for $L$
if $X$ is simple and $(L^g) > ((p+2)g)^g$,
equivalently $\chi(L) > \frac{g^g}{g!}  (p+2)^g$.
The bound $\chi(L)  \geq ((p+2)^{g+1}-1)/(p+1)$ in \autoref{thm_intro}
is smaller than the bound $\chi(L) > \frac{g^g}{g!}  (p+2)^g$
by a factor of approximately $\frac{g^g}{g!}  \cdot\frac{p+1}{p+2} $.

\vspace{1mm}
\noindent
(4)  Jiang also 
gives a numerical condition for a very general abelian variety $(X,L)$ to satisfy $(N_p)$
in \cite[Theorem 2.9]{jiang2021}.
As a special case, he \cite[Theorem 1.6]{jiang2021}  proves that $(N_p)$ holds for $L$ if
$(X,L)$ is very general of type $(1,\dots,1,d)$ and $d  > \frac{g^g}{g!}(p+2)^g$.
In particular,
\cite[Question 4.2]{MR3923760} has an affirmative answer for very general $(X,L)$ of type $(1,\dots,1,d)$.
In fact, the condition that $(X,L)$ is very general is explicit there, that is,
 \cite[Theorems 1.6,  2.9]{jiang2021} just
require the space of Hodge classes to be of dimension one in each degree.
Furthermore, 
\cite[Theorem 2.9]{jiang2021} treats $L$ of any type.

On the other hand,
the bound in \autoref{thm_intro}
is smaller than the bound in \cite[Theorem 1.6]{jiang2021}
by a factor of approximately $\frac{g^g}{g!}  \cdot\frac{p+1}{p+2} $ as in (3).
\end{rem}

\appendix

\section{Computation of $\beta(l)$ of general $(X,l)$ of type $(1,d)$ for some $d$}

Let $(X,l)$ be a general polarized abelian surface of type $(1,d)$.
By \autoref{prop_estimate_beta_surface} (1),
we have $\beta(l)=1/m$ if $d=m^2$ for some integer $m \geq 1$.
In the appendix,
we show that the upper bounds of $\beta(l)$ in \autoref{prop_estimate_beta_surface}
are sharp when $m $ is odd and $d$ is equal to $ m^2+m$
or $(m+1)^2-2$ or $(m+1)^2-1$.

\begin{lem}\label{lem_m(m+1)}
Let $m \geq 1$ be an odd integer. 
Let $(X,l)$ be a polarized abelian surface of type $(1,m^2+m)$.
Then $\beta(l) \geq 1/m$ holds.

In particular,
$\beta(l) = 1/m$ holds if $(X,l)$ is general.
\end{lem}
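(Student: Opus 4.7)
By \autoref{lem_beta_IT(0)}, the statement $\beta(l)\ge 1/m$ is equivalent to the failure of IT(0) for $\mathcal{I}_o\langle\tfrac{1}{m}l\rangle$; fixing a representative $L$ of $l$, this translates into finding $\alpha\in\hat X$ with $h^1(\mu_m^*\mathcal{I}_o\otimes L^m\otimes P_\alpha)>0$. The first step is a direct Euler-characteristic calculation: $\chi(\mu_m^*\mathcal{I}_o\otimes L^m\otimes P_\alpha)=m^2 d-m^4=m^3$ for $d=m^2+m$. Since $H^2(\mu_m^*\mathcal{I}_o\otimes L^m\otimes P_\alpha)=0$ for every $\alpha$ (by ampleness of $L^m\otimes P_\alpha$ and the fact that $\mu_m^{-1}(o)$ is zero-dimensional), the assertion reduces to: \emph{for some $\alpha$, the $m^4$-point subscheme $\mu_m^{-1}(o)\subset X$ fails to impose independent conditions on $|L^m\otimes P_\alpha|$}.

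I would then recast this via the projection formula. Since $\mu_m$ is \'etale of degree $m^4$ and the dual map $\mu_m^*\colon\hat X\to\hat X$ (which is multiplication by $m$) is surjective, every $\alpha\in\hat X$ is of the form $\mu_m^*\beta$, and the condition above is equivalent to the failure of IT(0) for $\mathcal{I}_o\otimes E$ on $X$, where $E:=(\mu_m)_* L^m$ is a rank-$m^4$ vector bundle which is itself IT(0). A Grothendieck--Riemann--Roch calculation gives $c_1(E)=m^3 l$, $\chi(E)=m^2 d$, and (using $(\mu_m)_* l^2=l^2$) shows that the discriminant $\Delta(E)=2\,\mathrm{rk}(E)\,c_2(E)-(\mathrm{rk}(E)-1)\,c_1(E)^2$ vanishes, so $E$ is a Mukai bundle. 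This structural fact is what will ultimately control the evaluation map $H^0(E\otimes P_\beta)\to E_o$.

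To produce the required $\alpha$, I would take $L$ symmetric (possible after a translation) and $p=o$, so $\mu_m^{-1}(o)=X[m]$. The hypothesis that $m$ is odd is used here via $X[m]\cap X[2]=\{o\}$: the $[-1]$-action on $X[m]$ then has only the origin as a fixed point, and for $\alpha\in\hat X[2]$ the evaluation $H^0(L^m\otimes P_\alpha)\to\mathbb{C}^{X[m]}$ is $[-1]$-equivariant, splitting into $\pm$-eigenpart maps whose targets have dimensions $(m^4\pm 1)/2$. The hard part will be that a bare $[-1]$-eigenspace count does not suffice: the holomorphic Lefschetz formula bounds $|h^+(L^m\otimes P_\alpha)-h^-(L^m\otimes P_\alpha)|$ by $4$, while for $m\ge 3$ the excess to be accounted for is of order $m^3$. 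To close the gap I plan to refine the analysis by decomposing $H^0(L^m\otimes P_\alpha)$ under the theta-group $\mathcal{G}(L^m)$-action restricted to the finite Heisenberg subgroup lying over $X[m]\subset K(L^m)$; the oddness of $m$ should ensure that the resulting isotypic decomposition is incompatible with a surjective evaluation on $X[m]$, producing the desired nonzero cokernel (hence $h^1>0$) and completing the proof.
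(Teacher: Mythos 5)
Your reduction is set up correctly: $\beta(l)\geq 1/m$ is indeed equivalent to finding some $\alpha$ with $h^1(\mu_m^*\cali_o\otimes L^m\otimes P_\alpha)>0$, i.e.\ to the failure of the $m^4$ points of $X[m]$ to impose independent conditions on $|L^m\otimes P_\alpha|$, and your count $\chi=m^2d-m^4=m^3$ is right. But that computation is precisely the problem: the linear system has dimension $m^4+m^3$, so there is a surplus of $m^3$ sections, and as you yourself observe, the $[-1]$-eigenspace comparison can only detect a discrepancy of size at most $4$. Everything after that point in your write-up is a plan, not a proof: the Mukai-bundle observation about $E=(\mu_m)_*L^m$ is not used for anything, and the final step --- that the Heisenberg isotypic decomposition of $H^0(L^m\otimes P_\alpha)$ over $X[m]$ ``should ensure'' non-surjectivity of evaluation --- is exactly the assertion to be proved, with no argument supplied. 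There is no reason a priori why an isotypic decomposition of a space of dimension $m^4+m^3$ mapping to a space of dimension $m^4$ must fail to be surjective piece by piece, so the gap is genuine and is the whole content of the lemma.

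The paper closes this gap by changing the cover, not by refining the representation theory on $X$. Instead of pulling back along $\mu_m$ (degree $m^4$, which inflates the surplus to $m^3$), it chooses $\tilde\tau\in X$ of order $2m$ with $\tau=\varphi_l(\tilde\tau)$ of order $2m$ and takes the degree-$2m$ isogeny $\pi\colon Y\to X$ dual to $\widehat{X}\to\widehat{X}/\langle\tau\rangle$. One checks $K(\pi^*l)\simeq(\Z/2m\Z\oplus\Z/(m^2+m)\Z)^{\oplus2}$, and here is where the oddness of $m$ enters: $2m\mid m^2+m$, so $\pi^*l=mn$ for a polarization $n$ of type $(2,m+1)$. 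By \cite[Lemma 2.6]{Ito:2020aa}, $\beta(l)<1/m$ would force $\cali_{\pi^{-1}(o)}\otimes N$ to be IT(0), i.e.\ the $2m$ points of $\pi^{-1}(o)$ (after any translation) to impose independent conditions on $|N|$, where $h^0(N)=2m+2$. Now the surplus is exactly $2$, and the symmetric-section count you were attempting becomes effective: $\dim H^0(N)^+=m+3$, and after translating $\pi^{-1}(o)$ by $-(2m-1)\ep'$ (with $2\ep'$ generating $\ker\pi$) the $2m$ points form $m$ antipodal pairs, so invariant sections vanishing at $m$ of them vanish at all $2m$, leaving at least $(m+3)-m=3>2$ sections. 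If you want to salvage your approach, the lesson is that you must first pass to an isogeny that makes the numerics tight before the $\Z/2\Z$-symmetry can be exploited; on $X$ itself (or on the $\mu_m$-cover) it cannot.
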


\begin{proof}
Let $\varphi_l : X \rightarrow \widehat{X}$ be the isogeny obtained by $p \mapsto t_p^* L \otimes L^{-1} \in \widehat{X}$.

\begin{claim}\label{claim_appendix}
There exists $\tilde{\sigma} \in X$ of order $2m$ such that the order of $\varphi_l(\tilde{\sigma}) \in \widehat{X}$ is $2m$ as well.
\end{claim}

\begin{proof}[Proof of \autoref{claim_appendix}]
Let $X_{2m}=\{ x \in X \, | \, 2m x=o_X \} \simeq (\Z/2m\Z)^{\oplus 4}$
and consider the exact sequence
\[
0 \ra  \ker \varphi_l  \cap X_{2m} \ra X_{2m} \ra \varphi_l(X_{2m}) \ra 0.
\]
Since $ \ker \varphi_l =K(l) \simeq (\Z/(m^2+m)\Z)^{\oplus 2}$,
the subgroup 
$ \ker \varphi_l  \cap X_{2m} \subset  \ker \varphi_l $ is generated by at most two elements.
By considering elementary divisors,
we see that there exists a basis $\sigma_1,\dots,\sigma_4$ of $X_{2m}$ as a free $\Z/2m \Z$-module 
such that the subgroup $ \ker \varphi_l \cap X_{2m} \subset X_{2m} $ is contained in  $\< \sigma_1,\sigma_2 \> \subset X_{2m}$.
Then $\tilde{\sigma}:=\sigma_3$ satisfies the condition in this claim.
\end{proof}

Take $\tilde{\sigma} \in X$ as in \autoref{claim_appendix} and set 
$\sigma:=\varphi_l(\tilde{\sigma}) \in \widehat{X}$.
Let $\pi : Y \rightarrow X$ be the dual isogeny of the quotient $\widehat{X} \rightarrow \widehat{X} / \langle \sigma \rangle$,
where $\Z/2m\Z \simeq  \langle \sigma \rangle \subset \widehat{X} $ is the subgroup generated by $\sigma$.
By a similar argument as the proof of \cite[Lemma 2.6]{MR1602020},
we can check that $K(\pi^* l) \simeq (\Z/2m\Z \oplus  \Z/(m^2+m)\Z)^{\oplus 2}$.
Since $m$ is odd by assumption,
$2m|m^2+m$ holds and hence $\pi^* l$ is of type $(2m,m^2+m)$.
Thus there exists a polarization $l'$  on $Y$ of type $(2,m+1)$ such that $\pi^*l =m l'$.
By \cite[Lemma 2.6]{Ito:2020aa},
\begin{align*}
\beta(l) < \tfrac{1}{m} \ &\Longleftrightarrow \ \cali_o \left\langle \tfrac{1}{m}  l \right\rangle \text{ is IT(0)}\\
&\Longleftrightarrow \ \pi^* \cali_{o} \left\langle \tfrac{1}{m}  \pi^*l \right\rangle = \cali_{\pi^{-1}(o)} \left\langle l' \right\rangle \text{ is IT(0)}\\
&\Longleftrightarrow  \ \cali_{\pi^{-1}(o)}   \otimes L' \text{ is IT(0)},
\end{align*}
where $L'$ is 
a line bundle on $Y$ representing $l'$ with characteristic $0$ with respect to some decomposition for $l'$.
The last condition is equivalent to
\begin{align*}
 h^0( \cali_{\pi^{-1}(o)+p}  \otimes L') = h^0( L') - \# \pi^{-1}(o) = 2(m+1) -2m =2
\end{align*}
for any $p \in Y$, where $ \pi^{-1}(o)+p $ is the parallel translation of $\pi^{-1}(o)$ by $p$.
Hence to show $\beta(l) \geq 1/m$,
it suffices to find a point $ p \in Y$ such that
$h^0( \cali_{\pi^{-1}(o)+p}   \otimes L') > 2$.

Since $L'$ is symmetric, $\Z/2\Z$ acts on $H^0(L')$.
Since $L'$ is of type $(2,m+1)$,
the dimension of the invariant part
$H^0(L')^{+} $ is $ h^0(L')/2 + 2=m+3$ by \cite[Corollary 4.6.6]{MR2062673}.
Take $\ep' \in Y$ such that $2 \ep' \in Y$ is a generator of $\ker \pi \simeq \Z/2m\Z$.
Then we have
\[
\pi^{-1}(o) - (2m-1)\ep' = \bigsqcup_{i=1}^{m} \{(2i-1)\ep', -(2i-1)\ep'\}.
\]
For a section $s \in H^0(L')^{+} $,
$s$ vanishes at $(2i-1)\ep' $ if and only if so does at $-(2i-1)\ep'$.
Hence $s \in H^0(L')^{+} $ is contained in $H^0( \cali_{\pi^{-1}(o) -(2m-1)\ep'} \otimes L' )$ if
$s$ vanishes at $m$ points $\ep',3\ep',\dots, (2m-1)\ep'$.
Thus $ h^0( \cali_{\pi^{-1}(o) -(2m-1)\ep'} \otimes L' )$ is greater than or equal to
\[
\dim H^0(L')^{+}  \cap H^0( \cali_{\pi^{-1}(o) -(2m-1)\ep'} \otimes L' ) \geq h^0(L')^{+} -m =3
\]
and $\beta(l) \geq 1/m$ follows.

The last statement follows from \autoref{prop_estimate_beta_surface} (1).
\end{proof}

\begin{rem}
In the first version of this paper,
the author wrote that it might be natural to guess $\beta(1,d) \geq \beta(1,d') $ for $d \leq d'$.
However,
this naive expectation is not true.
In fact,
a recent paper \cite{Rojas2021} investigates $\beta(l) $ for abelian surfaces using stability conditions,
and improves the bound in \autoref{prop_estimate_beta_surface}.
For example, $\beta(1,11) \leq 10/33$ holds by \cite[Theorem A]{Rojas2021}.
Hence we have $\beta(1,11) < 1/3 =\beta(1,3^2+3)=\beta(1,12) $.
\end{rem}

\begin{lem}\label{lem_m^2-1,2}
Let $m \geq 1$ be an odd integer. 
Let $(X,l)$ be a polarized abelian surface of type $(1,d)$
such that $d=(m+1)^2-1$ or $ d=(m+1)^2-2$.
Then $\beta(l) \geq (m+1)/d$ holds.

In particular,
$\beta(l) = (m+1)/d$ holds if $(X,l)$ is general.
\end{lem}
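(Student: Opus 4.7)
Since \autoref{prop_estimate_beta_surface}(2) already gives $\beta(l)\leq (m+1)/d$, the content is the reverse inequality $\beta(l)\geq (m+1)/d$, from which the last assertion follows. By \autoref{lem_beta_IT(0)} this amounts to showing that $\cali_o\langle \tfrac{m+1}{d}\,l\rangle$ is \emph{not} IT(0). The plan is to imitate the isogeny-plus-symmetry argument of \autoref{lem_m(m+1)}, choosing an isogeny $\pi\colon Y\to X$ so that $\tfrac{m+1}{d}\cdot l$ pulls back to the integral class $(m+1)n$, where $n$ is a principal polarization class on $Y$.

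Concretely, I would pick $\tilde\tau\in X[d]$ whose image in $X[d]/K(L)\simeq (\Z/d\Z)^{\oplus 2}$ has order $d$; then $\tau:=\varphi_l(\tilde\tau)\in\widehat X$ also has order $d$, and I let $\pi\colon Y\to X$ be the isogeny dual to $\widehat X\to\widehat X/\langle\tau\rangle$, so that $\deg\pi=d$ and $\ker\pi\simeq\Z/d\Z$. A computation parallel to the one tacitly used in \autoref{lem_m(m+1)}, via $K(\pi^*l)=\pi^{-1}(K(L)+\langle\tilde\tau\rangle)$ and the Weil form, should yield $K(\pi^*l)\simeq(\Z/d\Z\oplus\Z/d\Z)^{\oplus 2}$, so $\pi^*l$ has type $(d,d)$ and $\pi^*l=d\cdot n$ for a principal class $n$; I fix a symmetric representative $N$ of $n$ with characteristic $0$. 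Because pullback along an isogeny preserves IT(0) and $\tfrac{m+1}{d}\pi^*l=(m+1)n$ is integral, $\beta(l)<(m+1)/d$ becomes equivalent to the IT(0) of $\cali_{\pi^{-1}(o)}\otimes N^{m+1}$ on $Y$, which, in the spirit of the proof of \autoref{lem_m(m+1)}, is in turn equivalent to
\[
h^0\bigl(\cali_{\pi^{-1}(o)+p}\otimes N^{m+1}\bigr)=(m+1)^2-d=\epsilon
\]
for every $p\in Y$, where $\epsilon\in\{1,2\}$ according to $d=(m+1)^2-\epsilon$. It therefore suffices to exhibit one $p$ violating this equality.

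The violation will come from the $(-1)$-invariant subspace. Since $N^{m+1}$ is symmetric of type $(m+1,m+1)$ with both entries even (as $m$ is odd), \cite[Corollary 4.6.6]{MR2062673} gives $\dim H^0(N^{m+1})^+=((m+1)^2+4)/2$. When $\epsilon=1$, the integer $d$ is odd and $S:=\ker\pi\simeq\Z/d\Z$ is a symmetric subgroup whose only $2$-torsion point is $o$, so $S$ decomposes into $(d-1)/2$ antipodal pairs and one fixed point, whence
\[
\dim H^0(\cali_S\otimes N^{m+1})^+ \geq \frac{(m+1)^2+4}{2}-\frac{d+1}{2}=2>1=\epsilon.
\]
When $\epsilon=2$, the integer $d$ is even and I would take $p\in Y$ with $2p$ equal to a generator of $\ker\pi\simeq\Z/d\Z$; then $S:=\pi^{-1}(o)+p$ is symmetric and contains no $2$-torsion, so it splits into $d/2$ antipodal pairs with no fixed points, and
\[
\dim H^0(\cali_S\otimes N^{m+1})^+ \geq \frac{(m+1)^2+4}{2}-\frac{d}{2}=3>2=\epsilon.
\]
In either case $h^0(\cali_S\otimes N^{m+1})$ strictly exceeds $\epsilon$, so IT(0) fails for $\cali_{\pi^{-1}(o)}\otimes N^{m+1}$ and $\beta(l)\geq(m+1)/d$.

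The main obstacle is verifying that the extension $0\to\ker\pi\to K(\pi^*l)\to K(L)+\langle\tilde\tau\rangle\to 0$ is the balanced one, so that $\pi^*l$ has type $(d,d)$ rather than $(1,d^2)$; this should follow from a Weil-pairing computation in the spirit of the one implicit in \autoref{lem_m(m+1)}, but needs to be executed carefully. The remaining pieces, namely the existence of $\tilde\tau$, the choice of $p$ in the even case, and the dimension count on $H^0(N^{m+1})^+$, are all routine once the isogeny $\pi$ is in place.
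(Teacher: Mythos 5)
Your core argument is the paper's own: pass to a degree-$d$ isogeny $\pi\colon Y\to X$ with cyclic kernel and $\pi^*l=d\theta$ for a principal polarization $\theta$, translate the failure of IT(0) for $\cali_{o}\langle\tfrac{m+1}{d}l\rangle$ into finding a translate $S$ of $\ker\pi$ with $h^0(\cali_S\otimes\calo_Y((m+1)\Theta))>(m+1)^2-d$, and produce such an $S$ from the $(-1)$-invariant subspace of dimension $(m+1)^2/2+2$ by splitting $S$ into antipodal pairs (with the half-generator translation when $d$ is even). All your dimension counts agree with the paper's.

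The one genuine gap is the one you flag yourself: your construction of $\pi$ by dualizing $\widehat{X}\to\widehat{X}/\langle\tau\rangle$ leaves unverified that $\pi^*l$ has type $(d,d)$ rather than, say, $(1,d^2)$, and the entire reduction depends on this. The paper avoids the issue: it simply invokes the standard existence of an isogeny $\pi\colon Y\to X$ with $\ker\pi\simeq\Z/d\Z$ and $\pi^*l=d\theta$, which is a short symplectic-basis computation (if $E(\lambda_i,\mu_j)=d_i\delta_{ij}$ with $(d_1,d_2)=(1,d)$, replace $\Lambda$ by the sublattice $\Z d\lambda_1+\Z\lambda_2+\Z\mu_1+\Z\mu_2$, on which $E/d$ is integral and unimodular, and $\Lambda/\Lambda'\simeq\Z/d\Z$ is generated by $\lambda_1$). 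Your dual route can be completed — $K(\pi^*l)=\pi^{-1}(K(l)+\langle\tilde\tau\rangle)$, and $\pi^{-1}(x)$ lies in $Y[d]$ exactly when $x$ pairs trivially with $\tau$ under the canonical pairing between $X[d]$ and $\widehat{X}[d]$; since the induced pairing $(x,y)\mapsto\langle x,\varphi_l(y)\rangle$ on $X[d]$ is alternating and kills $K(l)$, the subgroup $K(l)+\langle\tilde\tau\rangle$ is contained in $\langle\tau\rangle^{\perp}$ and a cardinality count gives equality — but the lattice argument is shorter. Finally, your opening claim that \autoref{prop_estimate_beta_surface} (2) "already gives" the upper bound fails in the single case $(m,d)=(1,2)$, where $d<m^2+m+1$; the paper handles it separately by noting that $l$ is not basepoint free there, so $\beta(l)=1=(m+1)/d$ by the $p=-1$ case of \autoref{thm_ Bpf_threshold}.
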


\begin{proof}
Since $(X,l)$ is of type $(1,d)$, there exist an isogeny $\pi : Y \rightarrow X$  and a principal polarization $\theta$ on $Y$ such that
$\pi^* l=d \theta$
and $\pi^{-1}(o)=\ker \pi \simeq \Z/d \Z$.
As in the proof of \autoref{lem_m(m+1)},
\begin{align*}
\beta(l) < \tfrac{m+1}{d} \ &\Longleftrightarrow \ \cali_o \left\langle \tfrac{m+1}{d}  l \right\rangle \text{ is IT(0)}\\
&\Longleftrightarrow \ \pi^* \cali_{o} \left\langle \tfrac{m+1}{d}  \pi^*l \right\rangle = \cali_{\pi^{-1}(o)} \left\langle (m+1) \theta \right\rangle \text{ is IT(0)}\\
&\Longleftrightarrow  \ \cali_{\pi^{-1}(o)}  \otimes \calo_Y((m+1)\Theta) \text{ is IT(0)},
\end{align*}
where 
$\calo_Y(\Theta)$ is a line bundle representing $\theta$ with characteristic $0$ with respect to some decomposition for $\theta$.
Hence to show $\beta(l) \geq (m+1)/d$,
it suffices to find $p \in Y$ such that 
\begin{align}\label{eq_2or3}
\begin{aligned}
 h^0( \cali_{\pi^{-1}(o)+p}  \otimes \calo_Y((m+1)\Theta)) &>  h^0( \calo_Y((m+1)\Theta) ) - \# \pi^{-1}(o) \\
 &= (m+1)^2  - d =  
  \begin{cases}
    1 & \text{ if } d=(m+1)^2-1 \\
    2  & \text{ if } d=(m+1)^2-2.
  \end{cases}
\end{aligned}
\end{align}

By \cite[Corollary 4.6.6]{MR2062673},
the dimension of the $\Z/2\Z$-invariant part $H^0(\calo_Y((m+1)\Theta)^+$ is $ (m+1)^2/2+2$
since $m$ is odd.
Let $\ep \in \pi^{-1}(o) $ be a generator of $\pi^{-1}(o) \simeq \Z/d \Z$.

\vspace{2mm}
\noindent
{\bf Case 1} :   $d=(m+1)^2-1$.
Since $m$ is odd, $d$ is odd and 
\[
\pi^{-1}(o) = \{o_Y\} \sqcup \bigsqcup_{i=1}^{\frac{d-1}{2}} \{i\ep, -i\ep\}.
\]
As in the proof of \autoref{lem_m(m+1)},
a section $ s \in H^0(\calo_Y((m+1)\Theta)^+$ is contained in $H^0( \cali_{\pi^{-1}(o)}  \otimes \calo_Y((m+1)\Theta) )$
if $s$ vanishes at $\frac{d+1}{2}=\frac{(m+1)^2}{2}$ points $o_Y,\ep,2\ep,\dots, \frac{d-1}{2} \ep$.
Thus
\begin{align*}
\dim H^0(\calo_Y((m+1)\Theta))^+ \cap H^0( \cali_{\pi^{-1}(o)}  \otimes \calo_Y((m+1)\Theta) )   \hspace{33mm} \\
\geq h^0(\calo_Y((m+1)\Theta))^+ - \frac{(m+1)^2}{2} =2.
\end{align*}
Hence \ref{eq_2or3} holds
for $p=o_Y$ and we have $\beta(l) \geq (m+1)/d$. 

\vspace{2mm}
\noindent
{\bf Case 2}: 
 $d=(m+1)^2-2$.
Since $m$ is odd, $d$ is even.
Take $\ep' \in Y$ such that $2\ep'=\ep$.
Then 
\[
\pi^{-1}(o) - (d-1)\ep' = \bigsqcup_{i=1}^{\frac{d}{2}} \{(2i-1)\ep', -(2i-1)\ep'\}.
\]
Hence $ s \in H^0(\calo_Y((m+1)\Theta)^+$ is contained in $H^0( \cali_{\pi^{-1}(o)-(d-1)\ep' }  \otimes \calo_Y((m+1)\Theta) )$
if $s$ vanishes at $\frac{d}{2}=\frac{(m+1)^2-2}{2}$ points $\ep', 3\ep',5\ep',\dots, (d-1)\ep'$.
Thus
\begin{align*}
\dim H^0(\calo_Y((m+1)\Theta))^+ \cap H^0( \cali_{\pi^{-1}(o)-(d-1)\ep' }  \otimes \calo_Y((m+1)\Theta) )  \hspace{20mm} \\
 \geq h^0(\calo_Y((m+1)\Theta))^+ - \frac{(m+1)^2-2}{2}=3.
\end{align*}
Hence \ref{eq_2or3} holds
for $p=- (d-1)\ep'$ and we have $\beta(l) \geq (m+1)/d$.

\vspace{2mm}
If $d   \geq  m^2+m+1$,
the last statement of this proposition follows from the first statement of this proposition and \autoref{prop_estimate_beta_surface} (2).
Since $d=(m+1)^2 -1 = m^2+2m$ or  $d=(m+1)^2 -2 = m^2+2m -1$,
the condition $d   \geq  m^2+m+1$ does not hold only when $m=1$ and $d=2$.
For $m=1$ and $d=2$, 
we have $ \beta(l)=1 = (m+1)/d$ by the case $p=-1$ of \autoref{thm_ Bpf_threshold} since $l$ is not basepoint free.
Thus the last statement of this lemma holds.
\end{proof}

\begin{ex}\label{ex_table}
By \autoref{prop_estimate_beta_surface} and \autoref{lem_m^2-1,2},
we have the following computation or estimation of $\beta(1,d)$ 
for small $d$.

\begin{table}[H]
  \centering
    \begin{tabular}{|c||c|c|c|c|c|c|c|c|c|c|c|c|c|c|c|c|} \hline
       $d$ & 1 & 2 & 3 & 4 & 5 & 6 & 7 & 8 & 9 & 10 & 11 & 12 & 13 & 14 & 15 & 16 \\ \hline 
      $\beta(1,d)$ & 1 & 1 & $\dfrac23$ & $\dfrac12$ & $\dfrac12$ & $\dfrac12$ & $\leq \dfrac37$  & $\leq \dfrac38$  
      & $\dfrac13$ & $\leq \dfrac13$  & $\leq \dfrac13$ & $\dfrac13$ & $\leq \dfrac{4}{13}$  & $\dfrac27$ & $\dfrac{4}{15}$ &  $\dfrac14$\rule[-4mm]{0mm}{11mm} \\ \hline
    \end{tabular}
\vspace{4mm}
\caption{$\beta(1,d)$ for $d \leq 16$}
\end{table}
For $d=1,2$, $\beta(1,d) =1$ follows from the case $p=-1$ of \autoref{thm_ Bpf_threshold} since $l$ is not basepoint free. 
For $d=3,14,15$,  $\beta(1,d) $ is computed by \autoref{lem_m^2-1,2}.
For $d=4,9,16$, $\beta(1,d) =1/\sqrt{d}$ follows from \autoref{prop_estimate_beta_surface} (1).
For $d=5,6$,
we have $\beta(1,d) \leq 1/2$ by \autoref{prop_estimate_beta_surface} (1).
Furthermore, $\beta(1,d) \geq 1/2$ follows from the case $p=0$ of \autoref{thm_ Bpf_threshold}
since $ l$ is not projectively normal for $d \leq 6$ by $\dim H^0(X,L^{\otimes 2}) > \dim \Sym^2 H^0(X,L)$.
For $d=12$, we have $\beta(1,d)=1/3$ by \autoref{lem_m(m+1)}.
For the rest $d$, the upper bounds of $\beta(1,d)$ follow from \autoref{prop_estimate_beta_surface}.
\end{ex}

\bibliographystyle{amsalpha}
\providecommand{\bysame}{\leavevmode\hbox to3em{\hrulefill}\thinspace}
\providecommand{\MR}{\relax\ifhmode\unskip\space\fi MR }
\providecommand{\MRhref}[2]{%
  \href{http://www.ams.org/mathscinet-getitem?mr=#1}{#2}
}
\providecommand{\href}[2]{#2}

\end{document}